\title{ Bounds for the rank of a complex unit gain graph in terms of the independence number}
\author{ Shengjie  He$^1$, Rong-Xia Hao$^1$\footnote{Corresponding author.
Emails: he1046436120@126.com (Shengjie  He), rxhao@bjtu.edu.cn (Rong-Xia Hao), yuaimeimath@163.com (Aimei Yu)}, Aimei Yu$^1$\\
{\small\em 1. Department of Mathematics, Beijing Jiaotong University, Beijing,
100044, China}\\
  }
\date{} \textwidth 16cm \textheight 22cm \topmargin 0 cm \hoffset
\newtheorem{theorem}{Theorem}[section]
\newtheorem{lemma}[theorem]{Lemma}
\newtheorem{definition}[theorem]{Definition}
\begin{document}
\baselineskip 0.50cm \maketitle

\begin{abstract}
A complex unit gain graph (or $\mathbb{T}$-gain graph) is a triple $\Phi=(G, \mathbb{T}, \varphi)$
($(G, \varphi)$ for short) consisting of a graph $G$ as the underlying graph of $(G, \varphi)$, $\mathbb{T}= \{ z \in C:|z|=1 \} $ is a subgroup of the multiplicative group of all nonzero complex numbers $\mathbb{C}^{\times}$ and a gain function $\varphi: \overrightarrow{E} \rightarrow \mathbb{T}$ such
that $\varphi(e_{ij})=\varphi(e_{ji})^{-1}=\overline{\varphi(e_{ji})}$.
In this paper, we investigate the relation among the rank, the independence number
and the cyclomatic number of a complex unit gain graph $(G, \varphi)$ with order $n$, and prove that $2n-2c(G) \leq r(G, \varphi)+2\alpha(G) \leq 2n$. Where $r(G, \varphi)$, $\alpha(G)$ and $c(G)$ are the rank of the Hermitian adjacency matrix $A(G, \varphi)$, the independence number and the cyclomatic number of $G$, respectively.
Furthermore, the properties of the complex unit gain graph that reaching the lower bound are
characterized.

{\bf Keywords}: Complex unit gain graph; Rank; Independence number; Cyclomatic number.

{\bf MSC}: 05C50
\end{abstract}

\section{Introduction}
The study of the spectral properties of a graph is a popular subject in the graph theory.
The relation among the rank of the adjacent matrix and other topological structure parameters of a graph
has been studied extensively by many researchers.
Recently there has been a growing study of the rank of the adjacent matrix associated to
signed graphs and mixed graphs. In this paper we characterize the properties of the rank of a complex unit
gain graph. We refer to \cite{BONDY} for undefined terminologies and notation.

In this paper, we only consider the simple and finite graphs.
Let $G$ be an undirected graph with vertex set $V(G)=\{ v_{1}, v_{2}, \cdots, v_{n} \}$. The $degree$ of a vertex $u \in V(G)$, denote by $d_{G}(u)$, is the number of vertices
which are adjacent to $u$. A vertex of $G$ is called a {\it pendant vertex} if it is a vertex of degree one in $G$, whereas a vertex of $G$ is called a {\it quasi-pendant vertex} if it is adjacent to
a pendant vertex in $G$ unless it is a pendant vertex.
Denote by $P_n$ and $C_n$ a path and cycle on $n$ vertices, respectively.
The {\it adjacency matrix} $A(G)$ of $G$ is the $n \times n$ matrix whose $(i, j)$-entry equals to
1 if vertices $v_{i}$ and $v_j$ are adjacent and 0 otherwise.

A $complex$ $unit$ $gain$ $graph$ (or $\mathbb{T}$-gain graph) is a graph with the additional structure that
each orientation of an edge is given a complex unit, called a $gain$, which is the inverse of the
complex unit assigned to the opposite orientation. For a simple graph $G$ with order $n$,
let $\overrightarrow{E}$ be the set of oriented edges, it is obvious that this set contains two copies of each edge with opposite directions. We write $e_{ij}$ for the oriented edge from $v_{i}$ to $v_{j}$. The circle group, which is denoted by $\mathbb{T}= \{ z \in C:|z|=1 \} $, is a subgroup of the multiplicative group of all
nonzero complex numbers $\mathbb{C}^{\times}$. A complex unit gain graph is
a triple $\Phi=(G, \mathbb{T}, \varphi)$ consisting of a graph $G$, $\mathbb{T}= \{ z \in C:|z|=1 \} $ is a subgroup of the multiplicative group of all nonzero complex numbers $\mathbb{C}^{\times}$ and a gain function $\varphi: \overrightarrow{E} \rightarrow \mathbb{T}$, where $G$ is the underlying graph of $\Phi$ and   $\varphi(e_{ij})=\varphi(e_{ji})^{-1}=\overline{\varphi(e_{ji})}$.
For convenience, we write $(G, \varphi)$ for a complex unit gain graph $\Phi=(G, \mathbb{T}, \varphi)$ in this paper. The adjacency matrix associated to the complex unit gain graph $(G, \varphi)$ is the $n \times n$ complex matrix $A(G, \varphi)=a_{ij}$, where $a_{ij}=\varphi(e_{ij})$ if $v_{i}$ is adjacent to $v_{j}$, otherwise $a_{ij}=0$. It is obvious to see that $A(G, \varphi)$ is Hermitian and its eigenvalues are real. If the gain of every edge is 1 in $(G, \varphi)$, then the adjacency matrix $A(G, \varphi)$ is exactly the adjacency matrix $A(G)$ of the underlying graph $G$. It is obvious that a simple graph is assumed as a
complex unit gain graph with all positive gain 1's.
The $positive$ $inertia$ $index$, denoted by $p^{+}(G, \varphi)$, and the $negative$ $inertia$ $index$, denoted by $n^{-}(G, \varphi)$, of a complex unit gain graph $(G, \varphi)$ are defined to be the number of positive eigenvalues and negative eigenvalues of $A(G, \varphi)$ including multiplicities, respectively.
The $rank$ of a complex unit gain graph $(G, \varphi)$,
written as $r(G, \varphi)$, is defined to be the rank of $A(G, \varphi)$.
Obviously, $r(G, \varphi)=p^{+}(G, \varphi)+n^{-}(G, \varphi)$.

For an induced subgraph $H$ of a graph $G$,
denote by $G-H$, the subgraph obtained from $G$ by deleting all vertices of $H$ and all incident edges.
For a subset $X$ of $V(G)$, $G-X$ is the induced subgraph obtained from $G$ by deleting all vertices in $X$ and
all incident edges. In particular, $G-\{ x \}$ is usually written as $G-x$ for simplicity.
For an induced subgraph $H$ and a vertex $u$
outside $H$, the induced subgraph of $G$ with vertex set $V(H) \cup \{ u \}$ is simply written as $H+u$.

For a graph $G$, let $c(G)$ be the {\it cyclomatic number} of $G$,
that is $c(G)=|E(G)|-|V(G)|+\omega(G)$, where $\omega(G)$ is the number of connected components of $G$.
Two vertices of a graph $G$ are said to be $independent$ if they are not adjacent. A subset $I$ of
$V(G)$ is called an $independent$ $set$ if any two vertices of $I$ are independent in $G$. An independent set $I$ is $maximum$ if $G$ has no independent set $I'$ with $|I'|> |I'|$. The number
of vertices in a maximum independent set of $G$ is called the $independence$ $number$
of $G$ and is denoted by $\alpha(G)$.
For a complex unit gain graph $(G, \varphi)$, the independence number and cyclomatic number
 of $(G, \varphi)$  are defined to be the independence number and cyclomatic number of its underlying graph, respectively.

Let $G$ be a graph with pairwise vertex-disjoints cycles (if any) and $\mathscr{C}_{G}$ be the set of all cycles of $G$.
$T_{G}$ is an acyclic graph obtained from $G$ by contracting each cycle of $G$ into a vertex (called a {\it cyclic vertex}).
Denoted by $\mathscr{O}_{G}$ the set of all cyclic vertex of $G$.
Moreover, denoted by $[T_{G}]$ the subgraph of $T_{G}$
induced by all non-cyclic vertices. It is obviously that $[T_{G}]=T_{G}-\mathscr{O}_{G}$.

The rank of graphs have been discussed intensively by many researchers.
There are some papers focused on the study on the rank of graphs in terms of other topological structure parameters.
Wang and Wong characterized the bounds for the matching number, the edge chromatic number and the
independence number of a graph in terms of rank in \cite{WANGLONG}.
Gutman and  Sciriha \cite{GUT} studied the nullity of line graphs of trees.
Guo et al. \cite{MOHAR} and Liu et al. \cite{LXL} introduced the Hermitian adjacency matrix of a mixed graph and presented some basic properties of the rank of the mixed graphs independently.
In \cite{FYZ1}, the rank of the signed unicyclic graph was discussed by Fan et al.
He et al. characterized the relation among the rank,
the matching number and the cyclomatic number of a signed graph in \cite{HSJ}.
Chen et al. \cite{LSC} investigated the relation between the $H$-rank of a mixed graph and the
matching number of its underlying graph.
For other research of the rank of a graph one may be referred to those in \cite{BEVI,HOU,MAH2,MOHAR2,WANGXIN}.

Recently, the study of the properties of complex unit gain graphs has attracted increased attention.
Reff extended some fundamental concepts from spectral graph theory to complex unit gain graphs and
defined the adjacency, incidence and Laplacian matrices of them in \cite{REFF}.
Yu et al. \cite{YGH} investigated some properties of inertia of complex unit gain graphs and discussed the inertia index of a complex unit gain cycle.
In \cite{FYZUNIT}, Wang et al. provided a combinatorial description of the determinant of the Laplacian matrix of a complex unit gain graph which generalized that for the determinant of the Laplacian matrix of a signed graph.
Lu et al. \cite{LUY} studied the complex unit gain unicyclic graphs with small positive or
negative index and characterized the complex unit gain bicyclic graphs with rank 2, 3 or 4.
In \cite{WLG}, the relation among the rank of a complex unit gain graph and the rank of its underlying graph and the cyclomatic number was investigated by Lu et al.

In this paper, the upper and lower bounds of the rank of a complex unit gain graph $(G, \varphi)$ with order $n$
in terms of the cyclomatic number and the independence number of its underlying graph are investigated.
Moreover, the properties of the extremal graphs which attended the lower bound are identified.
The following Theorems \ref{T30} and \ref{T50} are our main results.

\begin{theorem}\label{T30}
Let $(G, \varphi)$ be a complex unit gain graph with order $n$. Then
$$2n-2c(G)-2\alpha(G) \leq r(G, \varphi) \leq 2n-2\alpha(G).$$
\end{theorem}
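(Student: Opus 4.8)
I would prove the two inequalities separately, expecting the upper bound to be a short matrix argument and the lower bound to need an induction on the order. For $r(G,\varphi)\le 2n-2\alpha(G)$, the plan is to use directly that a maximum independent set $I$ (so $|I|=\alpha(G)$) induces the zero principal submatrix of $A(G,\varphi)$. Ordering the vertices so that those of $I$ come first, write $A(G,\varphi)=\left(\begin{smallmatrix} O & B\\ B^{*} & C\end{smallmatrix}\right)$ with $O$ the $\alpha(G)\times\alpha(G)$ zero block. The first $\alpha(G)$ columns then lie in an $(n-\alpha(G))$-dimensional coordinate subspace, and the last $n-\alpha(G)$ columns span a subspace of dimension at most $n-\alpha(G)$, so $r(G,\varphi)\le 2(n-\alpha(G))$. (Equivalently, the span of $I$ is totally isotropic for the Hermitian form $A(G,\varphi)$, whence $p^{+}(G,\varphi)\le n-\alpha(G)$ and $n^{-}(G,\varphi)\le n-\alpha(G)$.)

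For the lower bound $r(G,\varphi)\ge 2n-2c(G)-2\alpha(G)$ I would induct on $n$, the base case being a graph with no edges, where $r(G,\varphi)=0$, $c(G)=0$, $\alpha(G)=n$ and equality holds. For the inductive step I would split into three cases that are exhaustive and each strictly decrease the order. (i) If $G$ has an isolated vertex $v$, then $r(G,\varphi)=r(G-v,\varphi)$, $\alpha(G)=\alpha(G-v)+1$ and $c(G)=c(G-v)$, so the claim for $G$ follows from the induction hypothesis for $G-v$. (ii) If $G$ has a pendant vertex $v$ with neighbour $u$, then the pendant-vertex reduction gives $r(G,\varphi)=r(G-u-v,\varphi)+2$; a maximum independent set of $G$ can be chosen to contain $v$ (hence to omit $u$), so $\alpha(G)=\alpha(G-u-v)+1$; and deleting the pendant vertex $v$ leaves $c$ unchanged while deleting $u$ cannot increase it, so $c(G-u-v)\le c(G)$. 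Feeding these into the induction hypothesis for $G-u-v$ yields $r(G,\varphi)+2\alpha(G)+2c(G)\ge\bigl(r(G-u-v,\varphi)+2\alpha(G-u-v)+2c(G-u-v)\bigr)+4\ge 2(n-2)+4=2n$. (iii) Otherwise $\delta(G)\ge 2$, so $G$ contains a cycle; pick a vertex $v$ on a cycle. Because the two cycle-neighbours of $v$ stay in one component of $G-v$, we get $\omega(G-v)\le\omega(G)+d_G(v)-2$, hence $c(G-v)\le c(G)-1$; also $\alpha(G-v)\le\alpha(G)$ and $r(G-v,\varphi)\le r(G,\varphi)$ since $A(G-v,\varphi)$ is a principal submatrix of $A(G,\varphi)$. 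The induction hypothesis for $G-v$ then gives
$$r(G,\varphi)+2\alpha(G)+2c(G)\ \ge\ r(G-v,\varphi)+2\alpha(G-v)+2c(G-v)+2\ \ge\ 2(n-1)+2\ =\ 2n.$$

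The single external ingredient is the pendant-vertex rank identity $r(G,\varphi)=r(G-u-v,\varphi)+2$, which comes from the usual elementary row and column operations peeling off a rank-$2$ block $\left(\begin{smallmatrix}0&\bar\beta\\ \beta&0\end{smallmatrix}\right)$ (with $|\beta|=1$) supported on $\{u,v\}$. I expect the point needing the most care to be the cyclomatic-number bookkeeping in case (iii)---verifying $c(G-v)\le c(G)-1$ for a vertex lying on a cycle, which is exactly where the hypothesis $\delta(G)\ge 2$ enters---together with checking that cases (i)--(iii) are genuinely exhaustive; the remaining manipulations are routine.
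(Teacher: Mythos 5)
Your proposal is correct. The upper bound is exactly the paper's argument: order the vertices so a maximum independent set $I$ gives the zero block, and bound the rank by the two row (or column) blocks, each of rank at most $n-\alpha(G)$. For the lower bound you and the paper share the decisive step, namely deleting a vertex $u$ on a cycle and combining $c(G-u)\le c(G)-1$, $\alpha(G-u)\le\alpha(G)$ and $r((G,\varphi)-u)\le r(G,\varphi)$ with the induction hypothesis (your verification of $c(G-v)\le c(G)-1$ is the paper's Lemma \ref{L23}(ii), which it cites rather than proves). The organization differs: the paper inducts on the cyclomatic number $c(G)$ and settles the base case $c(G)=0$ by the acyclic identity $r(T,\varphi)+2\alpha(T)=2|V(T)|$ (Lemma \ref{L052}, obtained from $r(T,\varphi)=r(T)=2m(T)$ and $m(T)+\alpha(T)=|V(T)|$), whereas you induct on $n$ and replace that appeal by isolated-vertex and pendant-vertex reductions, using $r(G,\varphi)=r((G,\varphi)-\{u,v\})+2$ (the paper's Lemma \ref{L13}, which you also sketch), $\alpha(G)=\alpha(G-\{u,v\})+1$ and $c(G-\{u,v\})\le c(G)$. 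Your route is more self-contained at the cost of a three-case analysis; the paper's is shorter given its cited lemmas. One small remark: the inequality $c(G-v)\le c(G)-1$ holds for any vertex on a cycle and does not actually use $\delta(G)\ge 2$; that hypothesis is only needed to guarantee that a cycle exists, so the emphasis in your closing comment is slightly misplaced, though nothing in the proof is affected.
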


\begin{theorem}\label{T50}
Let $(G, \varphi)$ be a complex unit gain graph with order $n$. Then $ r(G, \varphi)= 2n-2c(G)-2\alpha(G)$
if and only if all the following conditions hold for $(G, \varphi)$:

{\em(i)} the cycles (if any) of $(G, \varphi)$ are pairwise vertex-disjoint;

{\em(ii)} for each cycle (if any) $(C_{l}, \varphi)$ of $(G, \varphi)$, either $\varphi(C_{l}, \varphi)=(-1)^{\frac{l}{2}}$ and $l$ is even or $Re((-1)^{\frac{l-1}{2}}\varphi(C_{l}, \varphi))=0$ and $l$ is odd;

{\em(iii)} $\alpha(T_{G})=\alpha([T_{G}])+c(G)$.
\end{theorem}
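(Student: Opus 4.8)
The plan is to prove both implications together by a double induction --- on the cyclomatic number $c(G)$ as the outer parameter and on the order $n$ as the inner one --- reducing $(G,\varphi)$ by deleting pendant vertices and by peeling off pendant cycles until only forests and single cycles remain. The reductions we use (each of which should be isolated as a lemma beforehand) are: additivity of the rank over components, $r(G_1\cup G_2,\varphi)=r(G_1,\varphi)+r(G_2,\varphi)$; the pendant-vertex identity, $r(G,\varphi)=r(G-u-v,\varphi)+2$ whenever $v$ is pendant with neighbour $u$; the inertia of a complex unit gain cycle (Yu et al.\ \cite{YGH}), whose eigenvalues are $2\cos\!\big((\arg\varphi(C_l)+2\pi j)/l\big)$ for $j=0,\dots,l-1$, so that $r(C_l,\varphi)=l-2$ exactly when $l$ is even and $\varphi(C_l)=(-1)^{l/2}$, $r(C_l,\varphi)=l-1$ exactly when $l$ is odd and $\operatorname{Re}\!\big((-1)^{(l-1)/2}\varphi(C_l)\big)=0$, and $r(C_l,\varphi)=l$ otherwise; and a pendant-cycle lemma expressing $r(G,\varphi)$ through $r(C,\varphi)$ (or $r(C-w,\varphi)$) and the rank of $G$ with the pendant cycle $C$ removed. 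Writing $\tau(H)=|V(H)|-\alpha(H)$ for the vertex cover number, the desired equality $r(G,\varphi)=2n-2c(G)-2\alpha(G)$ says exactly that the ``defect'' $2\tau(G)-r(G,\varphi)$, which is nonnegative by the upper bound of Theorem~\ref{T30}, attains its largest allowed value $2c(G)$; and one checks from the cycle inertia formula that a single cycle contributes exactly $2$ to this defect under the gain condition of item (ii) and strictly less ($0$ or $1$) otherwise.

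For the base cases: if $c(G)=0$ then $G$ is a forest, so conjugating $A(G,\varphi)$ by a suitable diagonal unitary matrix (possible because $G$ has no cycle) turns it into $A(G)$, whence $r(G,\varphi)=r(G)=2\mu(G)$ with $\mu$ the matching number; by K\"onig's theorem together with the Gallai identity $\mu(G)=\tau(G)=n-\alpha(G)$, so equality always holds, while items (i), (ii) are vacuous and item (iii) reads $\alpha(G)=\alpha(G)$. If $G$ is a single cycle $C_l$, the inertia formula shows that $r(C_l,\varphi)=2l-2-2\alpha(C_l)$ is equivalent to item (ii), item (i) is vacuous, and item (iii) reads $1=0+1$; in both base cases the equivalence holds.

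For the inductive step, if $G$ has a pendant vertex choose $v$ at the far end of a longest pendant path, so its neighbour $u$ is not a cyclic vertex; then $n$ drops by $2$, $c(G-u-v)=c(G)$, one may arrange $\alpha(G-u-v)=\alpha(G)-1$, and $r(G-u-v,\varphi)=r(G,\varphi)-2$. Since the cycles and the forest $T_G$ change only inside the non-cyclic part --- $T_{G-u-v}=T_G-u-v$ and $[T_{G-u-v}]=[T_G]-u-v$ --- one verifies that items (i)--(iii) hold for $G$ if and only if they hold for $G-u-v$; the point requiring care is that the identity $\alpha(T_G)=\alpha([T_G])+c(G)$ is preserved by such a deletion, and this is the first of the places where the argument is delicate. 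Having removed all pendant vertices we may assume $\delta(G)\ge 2$. If the cycles of $G$ are pairwise vertex-disjoint, the forest $T_G$ has a leaf, which (as $G$ has no pendant vertex) must be a cyclic vertex $o_C$; thus $(C,\varphi)$ is a pendant cycle of $G$ meeting the rest of $G$ at one vertex $w$. Peeling $C$ off with the pendant-cycle lemma, the decrease of $2n-2c-2\alpha$ matches the decrease of $r(G,\varphi)$ precisely when the gain condition of item (ii) holds for $C$ and $\alpha(G)=\alpha(C)+\alpha(G-V(C))$; the latter is exactly what item (iii) delivers, via the fact that $T_G$ has a maximum independent set containing all $c(G)$ cyclic vertices (and hence of size $\alpha([T_G])$ on $[T_G]$). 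Running these peelings through the induction yields the ``if'' direction, and reading the same computations in reverse yields the ``only if'' direction: a cycle violating (ii) makes the defect drop below $2c(G)$, and once (i) and (ii) hold the equality forces $\alpha(T_G)=\alpha([T_G])+c(G)$.

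The genuinely hard part is condition (i) in the ``only if'' direction, i.e.\ showing that the extremal equality forces the cycles to be pairwise vertex-disjoint. The plan there is to suppose two cycles of $G$ share a vertex, extract a small induced subgraph $H$ witnessing the overlap (a theta subgraph, or two cycles glued at a vertex possibly joined by a path), check by direct computation --- using the cycle inertia formula and monotonicity of the rank under induced subgraphs --- that $H$ already satisfies $2\tau(H)-r(H,\varphi)<2c(H)$, and then propagate this strict deficiency to $G$ by deleting the remaining vertices in an order that keeps a witnessing subgraph intact, controlling the simultaneous changes of $r$, $c$ and $\alpha$ with the estimates $r(G-x,\varphi)\le r(G,\varphi)\le r(G-x,\varphi)+2$, the analogous crude bounds on $c$ and $\alpha$, and Theorem~\ref{T30}. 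Keeping the inequality strict rather than merely weak throughout this propagation --- both sides can move by the same amount under one deletion --- is the main obstacle, and it is resolved precisely by the ordering of the deletions.
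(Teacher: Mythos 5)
There is a genuine gap, and it sits exactly where you admit the argument is hardest. For the necessity of (i) you propose to extract a theta-type subgraph $H$ witnessing two overlapping cycles, check that $2\bigl(|V(H)|-\alpha(H)\bigr)-r(H,\varphi)<2c(H)$, and then ``propagate this strict deficiency to $G$'' by deleting the remaining vertices in a suitable order; you concede that keeping the inequality strict under deletions is the main obstacle and assert it is resolved ``precisely by the ordering of the deletions'' without exhibiting such an ordering or proving that any ordering works. This is not a detail: the quantity $2(n-\alpha)-r$ is not monotone in the needed direction under vertex deletion (one deletion can lower $2(n-\alpha)$ by $2$ while lowering $r$ by $0$, and it can also lower the target $2c$ by $2$ or more), and lower-optimality is not preserved by deleting arbitrary vertices, so the propagation cannot be waved through. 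The paper avoids this entirely by reading off the equality case of the lower bound of Theorem \ref{T30}: if $r(G,\varphi)=2n-2c(G)-2\alpha(G)$ and $u$ lies on a cycle, then every inequality in the chain (\ref{E0}) must be an equality, which forces $r((G,\varphi)-u)=r(G,\varphi)$, $\alpha(G-u)=\alpha(G)$, $c(G-u)=c(G)-1$, and that $(G,\varphi)-u$ is again lower-optimal (Lemma \ref{L000}); combined with Lemma \ref{L23}(iii), the identity $c(G-u)=c(G)-1$ immediately forbids $u$ from lying on two distinct cycles, so (i) comes essentially for free, and the same lemma (no quasi-pendant vertex on a cycle, invariance of lower-optimality under deleting cycle vertices) is the engine driving the rest of the necessity proof. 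Your sketch never extracts this equality-case information, and without it the necessity direction does not go through.

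A second, smaller gap: in the inductive step you ``choose $v$ at the far end of a longest pendant path, so its neighbour $u$ is not a cyclic vertex.'' That inference is false in general: in a cycle with a single pendant edge, the unique pendant vertex has its neighbour on the cycle. The existence of a pendant vertex whose neighbour avoids all cycles is not automatic; in the sufficiency direction it must be deduced from (iii), which is what the paper does via Lemma \ref{L055}(ii) (producing a pendant vertex of $T_{G}$ outside $\mathscr{O}_{G}$) followed by a separate independent-set counting argument (the Claim in the proof) showing the neighbour lies on no cycle, and in the necessity direction it is again a consequence of the equality-tracing Lemma \ref{L000}(v), not of an extremal choice of pendant path. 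Your remaining reductions (componentwise additivity, the pendant-vertex rank identity, the inertia classification of a gain cycle, peeling a pendant cycle, and using (iii) to split $\alpha(G)$ as $\alpha(C)+\alpha(G-V(C))$) do track the paper's Lemmas \ref{L51}, \ref{L13}, \ref{L12}, \ref{L002}, \ref{L55} and \ref{L58}, but the preservation of (iii) under the pendant reduction and the $\alpha$-splitting step are only flagged as delicate rather than proved, so as written the proposal does not constitute a proof of the theorem.
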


The rest of this paper is organized as follows. Prior to showing our main results, in Section 2,
we list some known elementary lemmas and results which will be useful.
In Section 3, we give the proof of the Theorem \ref{T30}. In Section 4, the properties of the extremal signed graphs which attained the lower bound of Theorem \ref{T30} are identified, and the proof of the Theorem \ref{T50} is presented.

\section{Preliminaries}

In this section, some known results and useful lemmas which will be
used in the proofs of our main results are listed.

\begin{lemma} \label{L16}{\rm\cite{YGH}}
Let $(G, \varphi)$ be a complex unit gain graph.

{\em(i)} If $(H, \varphi)$ is an induced subgraph of $(G, \varphi)$, then $r(H, \varphi) \leq r(G, \varphi)$.

{\em(ii)} If $(G_{1}, \varphi), (G_{2}, \varphi), \cdots, (G_{t}, \varphi)$ are all the connected components of $(G, \varphi)$, then $r(G, \varphi)=\sum_{i=1}^{t}r(G_{i}, \varphi)$.

{\em(iii)} $r(G, \varphi)\geq 0$ with equality if and only if $(G, \varphi)$ is an empty graph.
\end{lemma}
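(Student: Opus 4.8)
The plan is to prove all three parts directly from the definition of $A(G,\varphi)$ as a Hermitian matrix, since each assertion is a structural fact about how the rank of a Hermitian matrix behaves under passage to principal submatrices, formation of block sums, and vanishing. For part (i), I would first note that because $(H,\varphi)$ is an \emph{induced} subgraph, the edges of $H$ are exactly the edges of $G$ joining two vertices of $H$, and each such edge retains its gain. Hence, after relabelling the vertices of $G$ so that those of $H$ are listed first, $A(H,\varphi)$ is precisely the principal submatrix of $A(G,\varphi)$ indexed by $V(H)$. The inequality $r(H,\varphi)\le r(G,\varphi)$ then follows from the standard fact that the rank of any submatrix never exceeds the rank of the whole matrix: if $r(H,\varphi)=k$, then $A(H,\varphi)$ contains a $k\times k$ nonsingular submatrix, which is simultaneously a submatrix of $A(G,\varphi)$, forcing $r(G,\varphi)\ge k$.

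For part (ii), I would order the vertices of $G$ by listing those of $G_1$ first, then those of $G_2$, and so on. Since no edge of $G$ joins vertices lying in distinct components, every entry of $A(G,\varphi)$ outside the diagonal blocks vanishes, so $A(G,\varphi)$ is the block-diagonal matrix $\mathrm{diag}\bigl(A(G_1,\varphi),\dots,A(G_t,\varphi)\bigr)$. As the rank of a block-diagonal matrix equals the sum of the ranks of its diagonal blocks, we obtain $r(G,\varphi)=\sum_{i=1}^{t}r(G_i,\varphi)$. For part (iii), the inequality $r(G,\varphi)\ge 0$ is immediate since the rank of any matrix is nonnegative. For the equality case, observe that $r(G,\varphi)=0$ if and only if $A(G,\varphi)$ is the zero matrix. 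By definition $a_{ij}=\varphi(e_{ij})\in\mathbb{T}$ whenever $v_i$ is adjacent to $v_j$, so $|a_{ij}|=1$ on every edge; thus $A(G,\varphi)=0$ if and only if $G$ has no edges, that is, if and only if $(G,\varphi)$ is an empty graph.

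I expect no serious obstacle here. The only points requiring genuine care are verifying that an induced subgraph really corresponds to a \emph{principal} submatrix (which uses that gains are inherited unchanged, so that the off-diagonal entries match exactly) and that the block-diagonal decomposition in part (ii) is exact (which uses the absence of cross-component edges so that no off-block entry is a nonzero gain). Once these two combinatorial observations are in place, everything else is standard linear algebra over $\mathbb{C}$ applied to the Hermitian matrix $A(G,\varphi)$.
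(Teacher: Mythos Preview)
Your proposal is correct: each of the three parts follows exactly as you describe from the fact that $A(H,\varphi)$ is a principal submatrix of $A(G,\varphi)$, that $A(G,\varphi)$ is block-diagonal over components, and that $A(G,\varphi)=0$ iff $G$ has no edges. There is nothing to compare against, however, because the paper does not give its own proof of this lemma; it is quoted as a preliminary result from \cite{YGH} and stated without argument, so your write-up simply supplies the routine justification the paper omits.
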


\begin{definition} \label{L053}{\rm\cite{LUY}}
Let $(C_{n}, \varphi)$ ($n \geq 3$) be a complex unit gain cycle and
$$\varphi(C_{n}, \varphi)=\varphi(v_{1}v_{2} \cdots v_{n}v_{1} )=\varphi(v_{1}v_{2})\varphi(v_{2}v_{3})  \cdots \varphi(v_{n-1}v_{n})\varphi(v_{n}v_{1}).$$
Then $(C_{n}, \varphi)$ is said to be one of the following five Types:
$$\left\{
    \begin{array}{ll}
      \rm{Type~A}, & \hbox{if $\varphi(C_{n}, \varphi)=(-1)^{\frac{n}{2}}$ and $n$ is even;} \\
      \rm{Type~B}, & \hbox{if $\varphi(C_{n}, \varphi) \neq (-1)^{\frac{n}{2}}$ and $n$ is even;} \\
      \rm{Type~C}, & \hbox{if $Re((-1)^{\frac{n-1}{2}}\varphi(C_{n}, \varphi))>0$ and $n$ is odd;} \\
      \rm{Type~D}, & \hbox{if $Re((-1)^{\frac{n-1}{2}}\varphi(C_{n}, \varphi))<0$ and $n$ is odd;} \\
      \rm{Type~E}, & \hbox{if $Re((-1)^{\frac{n-1}{2}}\varphi(C_{n}, \varphi))=0$ and $n$ is odd.}
    \end{array}
  \right.
$$
Where $Re(\cdot)$ is the real part of a complex number.
\end{definition}

\begin{lemma} \label{L12}{\rm\cite{YGH}}
Let $(C_{n}, \varphi)$ be a complex unit gain cycle of order $n$. Then
$$(p^{+}(C_{n}, \varphi), n^{-}(C_{n}, \varphi))=\left\{
             \begin{array}{ll}
               (\frac{n-2}{2}, \frac{n-2}{2}), & \hbox{if $(C_{n}, \varphi)$ is of \rm{Type~A};} \\
               (\frac{n}{2}, \frac{n}{2}), & \hbox{if $(C_{n}, \varphi)$ is of \rm{Type~B};} \\
               (\frac{n+1}{2}, \frac{n-1}{2}), & \hbox{if $(C_{n}, \varphi)$ is of \rm{Type~C};} \\
               (\frac{n-1}{2}, \frac{n+1}{2}), & \hbox{if $(C_{n}, \varphi)$ is of \rm{Type~D};} \\
               (\frac{n-1}{2}, \frac{n-1}{2}), & \hbox{if $(C_{n}, \varphi)$ is of \rm{Type~E}.}
             \end{array}
           \right.
$$
\end{lemma}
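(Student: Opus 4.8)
The plan is to compute the spectrum of $A(C_n,\varphi)$ explicitly after reducing the gain cycle to a standard circulant form, and then to count signs of eigenvalues, matching each Type to the correct inertia pair. First I would recall the switching operation: for a diagonal unitary $D=\mathrm{diag}(\theta_1,\dots,\theta_n)$ with $|\theta_j|=1$, the congruence $A(C_n,\varphi)\mapsto D^{*}A(C_n,\varphi)D$ is a unitary similarity, hence preserves the eigenvalues and in particular the pair $(p^{+},n^{-})$. Under this operation each gain transforms as $\varphi(e_{ij})\mapsto \overline{\theta_i}\,\varphi(e_{ij})\,\theta_j$, so the cycle gain $\varphi(C_n,\varphi)$ is unchanged because the factors $\theta_j$ telescope around the cycle. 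Writing $\alpha=\varphi(C_n,\varphi)=e^{i\psi}$ and choosing $\theta_j=\omega^{\,j-1}$ with $\omega=\alpha^{1/n}$ a fixed $n$-th root, I would check that every edge gain becomes equal to $\omega$. The resulting matrix is the circulant $\omega S+\overline{\omega}\,S^{-1}$, where $S$ is the cyclic shift; diagonalizing in the Fourier basis gives the eigenvalues
$$\lambda_k=2\cos\!\Big(\tfrac{\psi+2\pi k}{n}\Big),\qquad k=0,1,\dots,n-1 .$$
All of the inertia information is now encoded in the signs of these $n$ cosines, i.e. in the distribution of the equally spaced angles $\theta_k=(\psi+2\pi k)/n$ relative to the imaginary axis.

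Next I would treat the even case. For $n$ even the angles satisfy $\theta_{k+n/2}=\theta_k+\pi$, so the multiset $\{\theta_k\}$ is invariant under $\theta\mapsto\theta+\pi$, which sends $\cos\theta\mapsto-\cos\theta$. This pairing immediately forces $p^{+}=n^{-}$ and an even number of zero eigenvalues. A zero occurs precisely when some $\theta_k\equiv\pi/2\pmod\pi$, and a short congruence computation shows this happens iff $\psi\equiv n\pi/2\pmod{2\pi}$, i.e. iff $\alpha=(-1)^{n/2}$ (Type A), in which case exactly two indices $k$ give zeros. Hence Type A yields $r=n-2$ and $(p^{+},n^{-})=(\tfrac{n-2}{2},\tfrac{n-2}{2})$, while Type B has no zeros and gives $(p^{+},n^{-})=(\tfrac{n}{2},\tfrac{n}{2})$.

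For $n$ odd the two candidate zero-angles $\pi/2$ and $3\pi/2$ are at angular distance $\pi$, which is not an integer multiple of the spacing $2\pi/n$; therefore at most one $\theta_k$ can be a zero, and exactly one occurs iff $Re\big((-1)^{(n-1)/2}\alpha\big)=0$ (Type E). In that case I would exhibit the reflection $R:\theta\mapsto\pi-\theta$ about the zero-angle, show that the Type-E condition makes $R$ permute $\{\theta_k\bmod 2\pi\}$ with a single fixed point, and conclude $p^{+}=n^{-}=(n-1)/2$ since $R$ sends $\cos\mapsto-\cos$. For Types C and D there is no zero, so $p^{+}+n^{-}=n$ and $|p^{+}-n^{-}|=1$; to decide the sign I would use the determinant, computing
$$\det A(C_n,\varphi)=\prod_{k=0}^{n-1}2\cos\!\Big(\tfrac{\psi+2\pi k}{n}\Big)=2\cos\psi=2(-1)^{(n-1)/2}\,Re\big((-1)^{(n-1)/2}\alpha\big),$$
and combining $\mathrm{sign}\,\det A=(-1)^{n^{-}}$ with the opposite parities of $(n-1)/2$ and $(n+1)/2$. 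Positivity of $Re\big((-1)^{(n-1)/2}\alpha\big)$ (Type C) then forces $n^{-}=(n-1)/2$, and negativity (Type D) forces $n^{-}=(n+1)/2$, as claimed.

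The main obstacle is the bookkeeping in the last step: matching the Type conditions, which are phrased through $Re\big((-1)^{(n-1)/2}\varphi(C_n,\varphi)\big)$, to the sign data of the cosines, and in particular pinning down the direction of the imbalance between $p^{+}$ and $n^{-}$ in Types C and D. The product identity for $\prod_{k}2\cos((\psi+2\pi k)/n)$, which rests on the factorization $\prod_{j}(1-w\zeta^{j})=1-w^{n}$ over the $n$-th roots of unity $\zeta^{j}$, is the computational heart of that step; once it is in hand, the sign of the determinant discriminates Type C from Type D cleanly, and the even/odd parity and symmetry arguments dispose of the remaining cases.
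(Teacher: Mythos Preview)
The paper does not prove this lemma; it is quoted from \cite{YGH} without argument. So there is no ``paper's own proof'' to compare against, and the relevant question is simply whether your argument is correct. It is, and it is in fact the standard route: switch to a uniform-gain cycle, read off the circulant eigenvalues $2\cos\big((\psi+2\pi k)/n\big)$, and count signs using the $\pi$-shift symmetry for even $n$, the reflection about the zero-angle in Type~E, and the determinant identity $\prod_{k}2\cos\big((\psi+2\pi k)/n\big)=2\cos\psi$ to separate Types~C and~D.

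One small slip worth fixing: the explicit choice $\theta_j=\omega^{\,j-1}$ does \emph{not} normalize an arbitrary gain function to the constant gain $\omega$; it only does so when all original edge gains are $1$. For a general $\varphi$ you need $\theta_1=1$ and $\theta_{j}=\omega^{\,j-1}\big/\prod_{i=1}^{j-1}\varphi(e_{i,i+1})$, and then the last edge is consistent precisely because $\omega^{\,n}=\varphi(C_n,\varphi)$. This is a bookkeeping detail and does not affect the rest of your argument, since all you actually use is the existence of such a switching and the resulting eigenvalue list. Everything downstream --- the pairing for even $n$, the identification of Type~A via $\alpha=(-1)^{n/2}$, the single-fixed-point reflection in Type~E, and the parity/sign analysis $\mathrm{sign}\det A=(-1)^{n^{-}}$ together with $\det A=2(-1)^{(n-1)/2}\,\mathrm{Re}\big((-1)^{(n-1)/2}\alpha\big)$ for odd $n$ --- is correct and cleanly pins down each case.
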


\begin{lemma} \label{L15}{\rm\cite{YGH}}
Let $(T, \varphi)$ be an acyclic complex unit gain graph.
Then $r(T, \varphi)= r(T)$.
\end{lemma}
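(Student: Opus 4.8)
The plan is to show that the gain on an acyclic graph can be completely ``switched away,'' so that $(T,\varphi)$ becomes the ordinary all-ones-gain graph after a unitary diagonal similarity, which preserves rank. Recall that a \emph{switching function} is any map $\theta\colon V(T)\to\mathbb{T}$; setting $D=\mathrm{diag}(\theta(v_1),\dots,\theta(v_n))$ one has $D^{*}A(T,\varphi)D=A(T,\varphi^{\theta})$, where $\varphi^{\theta}(e_{ij})=\overline{\theta(v_i)}\,\varphi(e_{ij})\,\theta(v_j)$. Since every $|\theta(v_i)|=1$, the matrix $D$ is unitary, so $D^{*}A(T,\varphi)D$ is a unitary similarity of $A(T,\varphi)$ and therefore $r(T,\varphi^{\theta})=r(T,\varphi)$ for every choice of $\theta$. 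Thus it suffices to exhibit a single $\theta$ for which $\varphi^{\theta}\equiv 1$ on all edges, because then $A(T,\varphi^{\theta})=A(T)$ and hence $r(T,\varphi)=r(A(T))=r(T)$.

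By Lemma \ref{L16}(ii) the rank is additive over connected components, and the underlying-graph rank is likewise additive, so I may assume $T$ is a tree (a connected acyclic graph); the general forest case follows by summing over components. Root $T$ at an arbitrary vertex $r$ and set $\theta(r)=1$. Processing the vertices in order of increasing distance from $r$, whenever $v_j$ is the child of its parent $v_i$ I define $\theta(v_j)=\theta(v_i)\,\overline{\varphi(e_{ij})}$; this assigns a value in $\mathbb{T}$ to every vertex. Because $T$ is acyclic, each non-root vertex has a unique parent and the unique $r$--$v_j$ path is the only route along which $\theta(v_j)$ is constrained, so the recursion is consistent and produces a well-defined $\theta\colon V(T)\to\mathbb{T}$.

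It remains to check that this $\theta$ trivializes the gain. For the tree edge joining a parent $v_i$ to its child $v_j$ one computes
$$
\varphi^{\theta}(e_{ij})=\overline{\theta(v_i)}\,\varphi(e_{ij})\,\theta(v_j)=\overline{\theta(v_i)}\,\varphi(e_{ij})\,\theta(v_i)\,\overline{\varphi(e_{ij})}=1,
$$
using $|\theta(v_i)|=1$ and $\varphi(e_{ij})\overline{\varphi(e_{ij})}=1$; the opposite orientation gives $\varphi^{\theta}(e_{ji})=\overline{\varphi^{\theta}(e_{ij})}=1$, so the Hermitian structure is respected and every edge now carries gain $1$. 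Hence $A(T,\varphi^{\theta})=A(T)$ and the conclusion $r(T,\varphi)=r(T)$ follows.

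I expect the only delicate point to be the well-definedness of $\theta$: the recursive assignment could in principle conflict if a vertex were reachable from the root by two distinct paths, and it is precisely the absence of cycles that rules this out. An alternative route avoiding switching would induct on the number of edges, repeatedly deleting a pendant vertex together with its neighbor via the standard rank reduction $r(G,\varphi)=r(G-u-v,\varphi)+2$; but the switching argument is cleaner and simultaneously yields the stronger fact that $(T,\varphi)$ and the trivially gained $(T,\mathbf{1})$ are in fact cospectral.
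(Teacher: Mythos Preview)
Your switching argument is correct. The recursive definition of $\theta$ along the unique root-to-vertex paths is well-defined precisely because $T$ is acyclic, and the verification that $\varphi^{\theta}(e_{ij})=1$ on every tree edge is a clean one-line computation using $|\theta(v_i)|=|\varphi(e_{ij})|=1$ and commutativity in $\mathbb{C}$. The reduction to the connected case via Lemma~\ref{L16}(ii) is also fine.

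As for comparison with the paper: there is nothing to compare. The paper does not prove Lemma~\ref{L15}; it is quoted verbatim from the reference \cite{YGH} and used as a black box. Your proof therefore supplies an argument where the paper supplies none. The alternative you sketch at the end---inducting on $|E(T)|$ by peeling off a pendant edge and invoking the rank reduction of Lemma~\ref{L13}---is in fact closer in spirit to how such results are typically derived in \cite{YGH}, and it stays entirely within the toolkit the present paper already assembles. Your switching proof, on the other hand, gives more: it shows $(T,\varphi)$ and $(T,\mathbf{1})$ are unitarily similar, hence cospectral, not merely of equal rank.
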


From Lemma \ref{L15}, we have the following Lemma \ref{LPN} directly.
\begin{lemma} \label{LPN}
Let $(P_{n}, \varphi)$ be a complex unit gain path with order $n$.
Then
$$ r(P_{n}, \varphi)=\left\{
                      \begin{array}{ll}
                        n-1, & \hbox{if $n$ is odd;} \\
                        n, & \hbox{if $n$ is even.}
                      \end{array}
                    \right.
$$
\end{lemma}

\begin{lemma} \label{L051}{\rm\cite{BONDY}}
Let $T$ be a acyclic graph with order $n$.
Then $r(T)=2m(T)$ and $\alpha(T)+m(T)=n$.
\end{lemma}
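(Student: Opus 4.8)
The plan is to establish both identities at once by induction on the order $n$, using a single structural reduction: the removal of a pendant vertex together with its unique neighbour. Since a forest $T$ with at least one edge has a component that is a tree on at least two vertices, and every such tree has a leaf, $T$ contains a pendant vertex $u$ whenever $E(T) \neq \emptyset$; let $v$ be its neighbour and write $T' = T - u - v$, a forest on $n-2$ vertices. Isolated vertices contribute nothing to $r$ or $m$ and only shift $\alpha$ by their count, so the edgeless forest, for which $r = 0$, $m = 0$, and $\alpha = n$, serves as the base case and trivially satisfies both $r = 2m$ and $\alpha + m = n$. The three reduction steps below then close the induction.

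First I would prove the rank drop $r(T) = r(T') + 2$. Order the vertices as $u$, then $v$, then the remaining set $W$, and write $A(T)$ in block form; because $u$ is pendant, both its row and its column equal the $v$-th standard basis vector. Using the single nonzero entry in row $u$ (respectively column $u$), one clears every entry of column $v$ (respectively row $v$) lying in $W$ by elementary row and column operations, which preserve rank. This leaves a block-diagonal matrix consisting of the $2\times 2$ block $\left(\begin{smallmatrix}0&1\\1&0\end{smallmatrix}\right)$ on $\{u,v\}$ and the adjacency matrix of $T'$ on $W$, so that $r(T) = 2 + r(T')$.

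Next, the matching identity $m(T) = m(T') + 1$ follows from the exchange argument that some maximum matching saturates the pendant $u$: if a maximum matching misses the edge $uv$ then $u$ is unmatched, and either $v$ is unmatched, so $uv$ could be added (contradicting maximality), or $v$ is matched to some $w$ and swapping $vw$ for $uv$ preserves the size; deleting $u$, $v$, and the edge $uv$ then removes exactly one matching edge. The independence identity $\alpha(T) = \alpha(T') + 1$ follows from the parallel exchange argument that some maximum independent set contains $u$ but not $v$: any maximum independent set can be modified, by replacing $v$ with $u$ when necessary, so that $u$ is in and $v$ is out; removing $u$ then gives an independent set of $T'$, while adjoining $u$ to a maximum independent set of $T'$ is legitimate since $u$'s only neighbour $v$ lies outside $T'$.

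Combining the three reductions with the inductive hypotheses $r(T') = 2m(T')$ and $\alpha(T') + m(T') = n-2$ yields $r(T) = 2m(T') + 2 = 2m(T)$ and $\alpha(T) + m(T) = (\alpha(T')+1)+(m(T')+1) = n$, completing the induction. The step I expect to require the most care is the rank reduction: one must verify that the elementary operations genuinely decouple the $\{u,v\}$ block from the $T'$ block without altering $A(T')$, which relies crucially on $u$ being pendant so that row $u$ and column $u$ are supported on the single coordinate $v$. The two exchange arguments, by contrast, are standard and self-contained. Alternatively, the second identity can be obtained at once from Gallai's identity $\alpha(T) = n - \tau(T)$ together with K\"onig's theorem $\tau(T) = m(T)$ for the bipartite graph $T$, where $\tau$ denotes the vertex cover number; but the uniform pendant induction keeps the whole argument elementary.
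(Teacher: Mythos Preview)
Your proof is correct. The paper does not actually prove this lemma; it is stated with a citation to Bondy--Murty and used as a known fact, so there is no original proof to compare against. Your pendant-removal induction is the standard elementary argument and would be entirely acceptable here; the alternative you mention via K\"onig--Gallai is equally valid and is essentially what the cited reference relies on.
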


Obviously, by Lemmas \ref{L15} and \ref{L051}, the following Lemma \ref{L052} can be obtained.

\begin{lemma} \label{L052}
Let $(T, \varphi)$ be an acyclic complex unit gain graph with order $n$.
Then $r(T, \varphi)+2\alpha(T)=2n$.
\end{lemma}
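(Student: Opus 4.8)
The plan is to derive this identity directly from the two lemmas just stated, since the hypothesis that $(T,\varphi)$ is acyclic lets us transfer everything to the underlying graph $T$. First I would invoke Lemma \ref{L15} to replace $r(T,\varphi)$ by $r(T)$: because $T$ carries no cycles, the gain function has no effect on the rank of the Hermitian adjacency matrix, so $r(T,\varphi)=r(T)$. This reduces the statement to a claim purely about the undirected acyclic graph $T$.

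Next I would apply Lemma \ref{L051}, which gives the two relations $r(T)=2m(T)$ and $\alpha(T)+m(T)=n$, where $m(T)$ is the matching number of $T$. Substituting $m(T)=n-\alpha(T)$ into $r(T)=2m(T)$ yields $r(T)=2n-2\alpha(T)$. Combining this with $r(T,\varphi)=r(T)$ from the previous step gives $r(T,\varphi)=2n-2\alpha(T)$, and rearranging produces $r(T,\varphi)+2\alpha(T)=2n$, as desired.

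There is essentially no obstacle here: the argument is a one-line chain of substitutions once Lemmas \ref{L15} and \ref{L051} are in hand, which is exactly why the text flags it as immediate. The only point worth stating carefully is that $\alpha(T)$ in the conclusion is the independence number of the underlying graph $T$, which by the convention fixed in the introduction is precisely the independence number of the gain graph $(T,\varphi)$, so no ambiguity arises. I would therefore present the proof as a short displayed computation rather than a structured argument.

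\begin{proof}
Since $(T,\varphi)$ is acyclic, Lemma \ref{L15} gives $r(T,\varphi)=r(T)$. By Lemma \ref{L051}, $r(T)=2m(T)$ and $\alpha(T)+m(T)=n$, so $m(T)=n-\alpha(T)$ and hence
$$r(T,\varphi)=r(T)=2m(T)=2\bigl(n-\alpha(T)\bigr)=2n-2\alpha(T).$$
Therefore $r(T,\varphi)+2\alpha(T)=2n$.
\end{proof}
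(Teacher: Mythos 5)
Your proof is correct and follows exactly the route the paper intends: the paper derives Lemma \ref{L052} directly from Lemmas \ref{L15} and \ref{L051} in the same way, substituting $m(T)=n-\alpha(T)$ into $r(T)=2m(T)$ after identifying $r(T,\varphi)$ with $r(T)$.
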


\begin{lemma} \label{L13}{\rm\cite{YGH}}
Let $y$ be a pendant vertex of a complex unit gain graph $(G, \varphi)$ and $x$ is the neighbour of $y$.
Then $r(G, \varphi)=r((G, \varphi)-  \{ x, y \} )+2$.
\end{lemma}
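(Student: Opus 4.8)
The plan is to work directly with the Hermitian adjacency matrix and to exploit the fact that a pendant vertex contributes only a single nonzero entry to its row and column. Order the vertices of $(G,\varphi)$ so that $y$ comes first, its unique neighbour $x$ comes second, and the remaining $n-2$ vertices come last. Writing $a=\varphi(e_{yx})$, the matrix $A(G,\varphi)$ acquires the block form
\[
A(G,\varphi)=\begin{pmatrix} 0 & a & \mathbf{0}^{*} \\ \bar a & 0 & \mathbf{b}^{*} \\ \mathbf{0} & \mathbf{b} & C \end{pmatrix},
\]
where $C=A((G,\varphi)-\{x,y\})$ and the column vector $\mathbf{b}$ records the gains on the edges joining $x$ to the remaining vertices. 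The first row and the first column are clean, each having a single nonzero entry, precisely because $y$ is pendant and $G$ is simple (no loop at $y$, so the diagonal is zero). The decisive structural feature is that the top-left $2\times 2$ block $B=\left(\begin{smallmatrix}0 & a\\ \bar a & 0\end{smallmatrix}\right)$ has determinant $-|a|^{2}=-1\neq 0$; this is exactly where the complex-unit hypothesis $|a|=1$ enters, furnishing an invertible pivot.

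First I would use the clean $y$-row and $y$-column as pivots to annihilate the coupling vector $\mathbf{b}$. For each remaining vertex $w_{j}$, subtracting a suitable scalar multiple of column $1$ (which equals $(0,\bar a,0,\dots,0)^{\!\top}$) from column $j+2$ clears the entry $b_{j}$ in the $x$-row, and symmetrically subtracting a multiple of row $1$ clears $\bar b_{j}$ in the $x$-column. The point I want to stress is that, because the $y$-row and $y$-column are supported only in the $x$-position, these operations alter nothing outside the $x$-row and $x$-column; in particular the block $C$ is untouched.

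After these eliminations the matrix is the direct sum of $B$ and $C$. Since elementary row and column operations preserve rank and $B$ has rank $2$, I obtain $r(G,\varphi)=2+r(C)=r((G,\varphi)-\{x,y\})+2$, as claimed. The same conclusion can be packaged as a Schur-complement identity: with the invertible pivot $B$ one has $r(A(G,\varphi))=r(B)+r\!\left(C-E^{*}B^{-1}E\right)$, where $E=\left(\begin{smallmatrix}\mathbf 0^{*}\\ \mathbf b^{*}\end{smallmatrix}\right)$ is the coupling block; the vanishing $y$-row of $E$ together with the anti-diagonal form of $B^{-1}$ makes $E^{*}B^{-1}E=0$, and again $r(A(G,\varphi))=2+r(C)$.

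No genuine obstacle arises beyond one piece of bookkeeping that deserves explicit verification, namely the claim that the clearing operations leave $C$ intact. This is immediate once one observes that the pivot row and column each have support at a single off-diagonal position, but it is worth spelling out so that the reader sees the pendant hypothesis (degree exactly one) being used in full, rather than merely that $y$ has small degree.
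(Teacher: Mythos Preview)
Your argument is correct. The paper does not supply its own proof of this lemma; it is quoted from \cite{YGH} and used as a black box. Your block-form computation, pivoting on the invertible $2\times 2$ block coming from the pendant edge and observing that the elimination leaves the $C$-block untouched, is exactly the standard elementary proof one finds in the source literature, so there is nothing substantively different to compare.
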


\begin{lemma} \label{L14}{\rm\cite{YGH}}
Let $x$ be a vertex of a complex unit gain graph $(G, \varphi)$.
Then $r(G, \varphi)-2 \leq r((G, \varphi)-x) \leq r(G, \varphi)$.
\end{lemma}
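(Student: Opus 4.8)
The plan is to translate the graph-deletion operation into a statement about principal submatrices of the Hermitian matrix $A(G,\varphi)$ and then invoke elementary facts about how the rank of a matrix changes when a single row or column is removed. Write $A = A(G,\varphi)$, an $n \times n$ Hermitian matrix, and observe that $A((G,\varphi)-x)$ is precisely the $(n-1)\times(n-1)$ principal submatrix $B$ of $A$ obtained by deleting the row and the column both indexed by $x$. Thus the lemma is equivalent to the purely linear-algebraic claim $r(A) - 2 \le r(B) \le r(A)$.

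First I would establish the two-sided \emph{single deletion} inequality: if $M$ is any matrix and $M'$ is obtained from $M$ by deleting one row (or one column), then $r(M) - 1 \le r(M') \le r(M)$. The bound $r(M') \le r(M)$ is immediate, since the columns (resp.\ rows) of $M'$ are restrictions (resp.\ a subset) of those of $M$. For the lower bound when deleting a column, the column space of $M'$ is spanned by all but one of the columns of $M$, and removing a single vector from a spanning set drops the dimension of the span by at most one; when deleting a row, the column space of $M'$ is the image of the column space of $M$ under the coordinate projection forgetting that row, a linear map with one-dimensional kernel, so again the dimension drops by at most one.

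Next I would obtain $B$ from $A$ by performing exactly two such single deletions: first delete the row of $A$ indexed by $x$ to get an $(n-1)\times n$ matrix $A'$, then delete the column of $A'$ indexed by $x$ to get $B$. Applying the single-deletion inequality twice gives $r(A) - 1 \le r(A') \le r(A)$ together with $r(A') - 1 \le r(B) \le r(A')$; chaining these yields $r(A) - 2 \le r(B) \le r(A)$, which is exactly the assertion.

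I expect no serious obstacle, since this is a standard perturbation-by-one fact; the only point needing care is confirming that the graph operation ``$-x$'' corresponds to deleting the single matching row \emph{and} column of the Hermitian matrix, so the rank can change by at most two rather than more. As an alternative route that also tracks finer data, one could invoke the Cauchy interlacing theorem: because $A$ is Hermitian and $B$ is a principal submatrix, the eigenvalues interlace, forcing $p^{+}(A)-1 \le p^{+}(B) \le p^{+}(A)$ and $n^{-}(A)-1 \le n^{-}(B) \le n^{-}(A)$; summing these and using $r = p^{+}+n^{-}$ recovers the same bounds.
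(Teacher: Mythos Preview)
Your argument is correct: identifying $A((G,\varphi)-x)$ as the principal submatrix of the Hermitian matrix $A(G,\varphi)$ obtained by deleting the row and column indexed by $x$, and then applying the single-row/column deletion bound twice, cleanly yields $r(G,\varphi)-2 \le r((G,\varphi)-x) \le r(G,\varphi)$. The alternative via Cauchy interlacing is also valid and in fact gives the slightly sharper information on $p^{+}$ and $n^{-}$ separately.

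Note, however, that the paper does not supply its own proof of this lemma: it is quoted from \cite{YGH} and stated without argument. So there is no in-paper proof to compare against; your elementary rank-perturbation approach (or the interlacing variant) is a standard and perfectly acceptable justification of the cited result.
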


\begin{lemma} \label{L053}{\rm\cite{HLSC}}
Let $y$ be a pendant vertex of a graph $G$ and $x$ is the neighbour of $y$.
Then $\alpha(G)=\alpha(G-x)=\alpha(G-\{ x, y \})+1$.
\end{lemma}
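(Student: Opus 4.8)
The plan is to prove the two claimed equalities separately, each by an elementary exchange argument on independent sets. Throughout I write $I$ for a maximum independent set whenever convenient, and I use freely that if $H$ is an induced subgraph of $G$ then every independent set of $H$ is independent in $G$.

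First I would establish $\alpha(G)=\alpha(G-x)$. Since $G-x$ is an induced subgraph of $G$, every independent set of $G-x$ is independent in $G$, which gives the easy inequality $\alpha(G-x)\le\alpha(G)$. For the reverse inequality the key step is to show that some maximum independent set of $G$ can be chosen to contain the pendant vertex $y$. Starting from an arbitrary maximum independent set $I$, if $y\notin I$ then by maximality $I$ cannot be enlarged by adding $y$, so $y$ has a neighbour in $I$; as $x$ is the only neighbour of $y$, this forces $x\in I$. Replacing $I$ by $(I\setminus\{x\})\cup\{y\}$ then yields an independent set of the same cardinality that contains $y$, because the sole vertex adjacent to $y$ is the deleted $x$. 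Thus we may assume $y\in I$, whence $x\notin I$ and $I\subseteq V(G)\setminus\{x\}$; this $I$ is an independent set of $G-x$, giving $\alpha(G-x)\ge|I|=\alpha(G)$. Combining the two inequalities proves the first equality.

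Next I would prove $\alpha(G-x)=\alpha(G-\{x,y\})+1$. The observation driving this step is that in $G-x$ the vertex $y$ is isolated, since its unique neighbour $x$ has been removed, and moreover $G-\{x,y\}=(G-x)-y$. It then suffices to record the general fact that deleting an isolated vertex from a graph decreases the independence number by exactly one: adding an isolated vertex to any independent set keeps it independent, so $\alpha(G-x)\ge\alpha(G-\{x,y\})+1$, while conversely every maximum independent set of $G-x$ must contain the isolated $y$ (otherwise it could be enlarged by $y$), so deleting $y$ leaves an independent set of $G-\{x,y\}$ of cardinality $\alpha(G-x)-1$, giving $\alpha(G-x)\le\alpha(G-\{x,y\})+1$. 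This yields the second equality and completes the proof.

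Since both steps are short exchange arguments, I do not anticipate a genuine obstacle; the only point requiring care is the swap $(I\setminus\{x\})\cup\{y\}$, where one must verify that removing $x$ does not leave any vertex of $I$ adjacent to the newly added $y$, which holds precisely because $x$ is the sole neighbour of the pendant vertex $y$.
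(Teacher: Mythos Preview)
Your argument is correct. Both exchange steps are clean: the swap $(I\setminus\{x\})\cup\{y\}$ is justified exactly as you note, and the observation that $y$ is isolated in $G-x$ immediately gives the second equality.

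As for comparison with the paper: this lemma is not proved in the paper at all. It is quoted from \cite{HLSC} and stated without proof, as one of the preliminary results collected in Section~2. So there is no ``paper's own proof'' to compare against; your elementary proof simply supplies what the paper takes for granted.
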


\begin{lemma} \label{L23}{\rm\cite{WDY}}
Let $G$ be a graph with $x \in V(G)$.

{\em(i)} $c(G)=c(G-x)$ if $x$ lies outside any cycle of $G$;

{\em(ii)} $c(G-x) \leq c(G)-1$ if $x$ lies on a cycle of $G$;

{\em(iii)} $c(G-x) \leq c(G)-2$ if $x$ is a common vertex of distinct cycles of $G$.
\end{lemma}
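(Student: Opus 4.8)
The plan is to reduce all three statements to a single counting identity for the cyclomatic number under vertex deletion, and then to read off each case from the combinatorics of how the neighbours of $x$ distribute among the components created by deleting $x$.

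First I would record a master formula. Using $c(H)=|E(H)|-|V(H)|+\omega(H)$ together with $|V(G-x)|=|V(G)|-1$ and $|E(G-x)|=|E(G)|-d_{G}(x)$, one gets
\[
c(G-x)=c(G)-d_{G}(x)+\big(\omega(G-x)-\omega(G)+1\big).
\]
Let $C$ denote the connected component of $G$ containing $x$; deleting $x$ affects only $C$, splitting it into $k\geq 0$ pieces, so $\omega(G-x)-\omega(G)=k-1$ and the identity collapses to
\[
c(G)-c(G-x)=d_{G}(x)-k .
\]
Thus everything reduces to comparing $d_{G}(x)$ with the number $k$ of components of $C-x$.

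Next I would prove the structural fact that \emph{each} component of $C-x$ contains at least one neighbour of $x$: any vertex of such a component is joined to $x$ inside $C$ by a path whose last internal vertex is a neighbour of $x$ lying in that same component. Hence the $d_{G}(x)$ neighbours partition among the $k$ components in groups of sizes $s_{1},\dots,s_{k}\geq 1$, and
\[
d_{G}(x)-k=\sum_{i=1}^{k}(s_{i}-1)\geq 0 ,
\]
so $c(G-x)\leq c(G)$ always. The crucial translation is that two neighbours of $x$ fall into the same component of $C-x$ exactly when they are joined by an $x$-avoiding path, i.e.\ exactly when they are the two $x$-neighbours on a cycle through $x$. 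Now (i) follows because, if $x$ lies on no cycle, no two neighbours can be joined by an $x$-avoiding path (such a path plus $x$ would close a cycle through $x$), forcing every $s_{i}=1$, hence $k=d_{G}(x)$ and $c(G-x)=c(G)$. Part (ii) follows because a cycle through $x$ supplies two neighbours joined by its complementary path, so some $s_{i}\geq 2$ and $\sum(s_{i}-1)\geq 1$. For (iii), two cycles meeting only at $x$ supply four distinct neighbours (two from each cycle), each pair internally connected in $C-x$; whether the two pairs land in one component (some $s_{i}\geq 4$) or in two different components (two indices with $s_{i}\geq 2$), we obtain $\sum(s_{i}-1)\geq 2$, hence $c(G-x)\leq c(G)-2$.

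The main obstacle is pinning down part (iii): the hypothesis must be read as ``$x$ lies on two cycles that are vertex-disjoint apart from $x$'' (equivalently, $x$ is a cut vertex carrying a cycle in each of two distinct blocks). Without this, merely lying on two distinct cycles is not enough, since two cycles sharing both edges at $x$ and differing only along a further common part can leave all neighbours of $x$ in a single component with $\sum(s_{i}-1)=1$ only. Thus the care in (iii) is to exploit the vertex-disjointness to guarantee four genuinely distinct neighbours, distributed among the components of $C-x$ so that the deficiency $d_{G}(x)-k$ is forced to be at least $2$.
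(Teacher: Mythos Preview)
The paper does not prove this lemma; it is quoted from \cite{WDY} without argument, so there is no in-paper proof to compare your approach against. Your derivation via the identity $c(G)-c(G-x)=d_{G}(x)-k$ (with $k$ the number of pieces into which the component of $x$ breaks upon deletion of $x$) is the standard and correct route, and your handling of (i) and (ii) is clean.

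Your caveat on (iii) is well taken and worth recording: read literally, two distinct cycles through $x$ that share \emph{both} edges incident with $x$ (so $d_{G}(x)=2$, as at a degree-$2$ vertex of a theta graph) give only $d_{G}(x)-k=1$, and the conclusion fails. The hypothesis that makes (iii) true is precisely the one you isolate, that the two cycles meet only at $x$; under that reading your four-neighbour case split is sound. In the paper the lemma is only invoked contrapositively (in the proof that cycles of a lower-optimal graph are pairwise vertex-disjoint), and for that application the corrected reading suffices.
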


\begin{lemma} \label{L054}{\rm\cite{HLSC}}
Let $G$ be a graph. Then

{\em(i)} $\alpha(G)-1  \leq \alpha(G-x) \leq \alpha(G) $ for any vertex $x \in V(G)$;

{\em(ii)} $\alpha(G-e) \geq \alpha(G)$ for any edge $e \in E(G)$.
\end{lemma}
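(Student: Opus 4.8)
The plan is to derive both inequalities directly from the definition of the independence number, exploiting the fact that the property ``spans no edge'' is monotone under the two deletion operations: removing a vertex or an edge can only enlarge the family of independent sets, apart from the possible loss of the deleted vertex itself. I would treat parts (i) and (ii) separately, in each case producing an explicit independent set of the required size in the modified graph and then invoking maximality.

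For the upper bound in (i), I would start from a maximum independent set $I'$ of $G-x$, so that $|I'|=\alpha(G-x)$. Since $x\notin I'$ and $G-x$ is the induced subgraph of $G$ on $V(G)\setminus\{x\}$, any edge of $G$ joining two vertices of $I'$ would already be an edge of $G-x$; as $I'$ is independent there, no such edge exists, so $I'$ is independent in $G$ and $\alpha(G-x)=|I'|\leq\alpha(G)$. For the lower bound I would take a maximum independent set $I$ of $G$, with $|I|=\alpha(G)$, and consider $I\setminus\{x\}$. This set is a subset of $I$, hence spans no edge of $G$ and in particular none of the fewer edges of $G-x$, so it is independent in $G-x$; since it loses at most the single vertex $x$, its size is at least $\alpha(G)-1$, giving $\alpha(G-x)\geq\alpha(G)-1$. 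Combining the two bounds yields the chain in (i).

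For (ii), I would again begin from a maximum independent set $I$ of $G$. The key observation is that passing from $G$ to $G-e$ deletes only the adjacency carried by $e$ and creates no new adjacency, so every pair of vertices nonadjacent in $G$ remains nonadjacent in $G-e$. Consequently $I$ stays independent in $G-e$, and since $V(G-e)=V(G)$ this gives $\alpha(G-e)\geq|I|=\alpha(G)$.

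I do not anticipate a genuine obstacle here: each step reduces to verifying that a concrete candidate set is independent and then counting its vertices. The only point that warrants a moment of care is the lower bound of (i), where one should distinguish the cases $x\in I$ and $x\notin I$ to confirm that at most one element is removed; in the latter case one in fact obtains $\alpha(G-x)\geq\alpha(G)$, which is consistent with, and stronger than, the stated inequality.
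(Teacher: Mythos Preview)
Your argument is correct: each inequality follows directly from the definition of the independence number by exhibiting an explicit independent set of the required size, and the case distinction $x\in I$ versus $x\notin I$ in the lower bound of (i) is handled properly.

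As for comparison with the paper: the paper does not supply its own proof of this lemma. It is quoted as a preliminary fact from \cite{HLSC} and stated without argument, so there is no approach in the paper to compare yours against. Your direct verification from the definition is the standard one and is entirely adequate for such an elementary statement.
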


\begin{lemma} \label{L055}{\rm\cite{HLSC}}
Let $T$ be a tree with at least one edge and $T_{0}$ be the subtree obtained
from $T$ by deleting all pendant vertices of $T$.

{\em(i)} $\alpha(T)  \leq \alpha(T_{0}) +p(T) $, where $p(T)$ is the number of pendent vertices of $T$;

{\em(ii)} If $\alpha(T) = \alpha(T-D)+|D|$ for a subset $D$ of $V(T)$, then there is a pendant vertex $x$ such
that $x \notin D$.
\end{lemma}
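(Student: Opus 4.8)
The plan is to treat the two parts separately: part (i) is immediate, while part (ii) is the substantial statement and calls for an induction whose only genuine difficulty is a bookkeeping verification in one case.

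For part (i), write $P$ for the set of pendant vertices of $T$, so that $T_{0}=T-P$ and $|P|=p(T)$. I would take a maximum independent set $I$ of $T$ and split it as $I=(I\cap V(T_{0}))\cup(I\cap P)$. The first piece is an independent set of the induced subgraph $T_{0}$, so $|I\cap V(T_{0})|\le\alpha(T_{0})$, while trivially $|I\cap P|\le|P|=p(T)$. Adding these gives $\alpha(T)=|I|\le\alpha(T_{0})+p(T)$, and nothing more is needed.

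For part (ii) I would argue by contraposition, using the elementary bound $\alpha(G)\le\alpha(G-D)+|D|$ valid for every graph $G$ and every $D\subseteq V(G)$, which follows by iterating Lemma \ref{L054}(i). Thus the equality hypothesis fails exactly when $\alpha(T)<\alpha(T-D)+|D|$, and part (ii) becomes: \emph{if every pendant vertex of $T$ lies in $D$ (i.e.\ $P\subseteq D$), then $\alpha(T)<\alpha(T-D)+|D|$.} I would prove this by strong induction on $|V(T)|$; since deleting vertices turns a tree into a forest, I phrase the inductive claim for forests all of whose components have at least one edge, reducing to the tree case by additivity of $\alpha$, of $p$, and of $|D|$ over components. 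For the inductive step, if $T$ is a star the inequality is checked directly (whether or not the center lies in $D$, one finds $\alpha(T-D)+|D|\ge\alpha(T)+1$). Otherwise I pick an endpoint $\ell$ of a longest path, its neighbour $s$, and the unique non-pendant neighbour $v_{2}$ of $s$; by the longest-path property every neighbour of $s$ other than $v_{2}$ is a pendant, and I let $L_{s}$ be this set (so $\ell\in L_{s}\subseteq P\subseteq D$). Lemma \ref{L053} gives $\alpha(T)=\alpha(T-s)=|L_{s}|+\alpha(W)$ with $W=T-s-L_{s}$. I then split on three cases, writing $D''=D\setminus L_{s}$. If $s\in D$, the elementary bound applied to $T-\{\ell,s\}$ (whose independence number is $\alpha(T)-1$ by Lemma \ref{L053}) already yields the strict inequality. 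If $s\notin D$ but $v_{2}\in D$, then $s$ is isolated in $T-D$, giving $\alpha(T-D)=1+\alpha(W-D'')$, and the elementary bound for $W$ again suffices. The only case invoking the induction hypothesis is $s,v_{2}\notin D$: here $s$ is a pendant of $T-D$ with neighbour $v_{2}$, so two applications of Lemma \ref{L053} give $\alpha(T-D)=1+\alpha(W-D''-v_{2})$, and the goal reduces exactly to $\alpha(W)<\alpha(W-X)+|X|$ with $X=D''\cup\{v_{2}\}$, which is the same statement for the strictly smaller forest $W$.

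The crux, and the step I expect to be most delicate, is checking that the pair $(W,X)$ actually satisfies the inductive hypothesis, namely that $W$ has no isolated vertex (so every component has an edge) and that every pendant vertex of $W$ lies in $X$. Both hinge on a single observation: among the vertices surviving into $W$, only $v_{2}$ loses a neighbour when $s$ and $L_{s}$ are deleted, since the vertices of $L_{s}$ are pendants adjacent only to $s$. Hence the degree of every surviving vertex is unchanged except at $v_{2}$, whose degree drops by exactly one but remains at least $1$; so no isolated vertex appears, and the only pendant of $W$ that is not already a pendant of $T$ is possibly $v_{2}\in X$, while the pendants of $T$ lying in $W$ belong to $P\setminus L_{s}\subseteq D''\subseteq X$. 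Once these two verifications are in place, the induction closes and part (ii) follows.
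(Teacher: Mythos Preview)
The paper does not give its own proof of this lemma: it is quoted verbatim from \cite{HLSC} and used as a black box, so there is no argument in the present paper to compare your proposal against. That said, your proof is correct. Part (i) is immediate as you wrote. For part (ii) your contrapositive-and-induction strategy is sound; the case split on whether $s$ and $v_{2}$ lie in $D$ is clean, and the verification that $(W,X)$ inherits the hypothesis (no isolated vertices, all pendants in $X$) is exactly the point that needs care and you have handled it. One small remark on exposition: since $W$ is in general a forest rather than a tree, it is cleanest either to state the inductive claim directly for forests without isolated vertices, or to observe in Case~3 that $W$ splits into tree components $W_{1},\dots,W_{k}$ of strictly smaller order, that $X$ splits accordingly, and that summing the strict inequalities $\alpha(W_{i})<\alpha(W_{i}-X_{i})+|X_{i}|$ given by the induction hypothesis yields $\alpha(W)<\alpha(W-X)+|X|$. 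Either way the argument closes.
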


\section{Proof of Theorem \ref{T30}}

In this section, the proof for Theorem \ref{T30} is presented.

\noindent
{\bf The proof of Theorem \ref{T30}.}

Firstly, we show that $ r(G, \varphi)\leq 2n -2\alpha(G)$.
Let $I$ be a maximum independent set of $G$, i.e., $|I|=\alpha(G)$. Then

$$A(G, \varphi)=\left(
                \begin{array}{cc}
                  \mathbf{0} & \boldsymbol{B} \\
                  \boldsymbol{B}^{\top} & \boldsymbol{A} \\
                \end{array}
              \right)
$$
where $\boldsymbol{B}$ is a submatrix of $A(G, \varphi)$ with row indexed by $I$ and column indexed by $V(G)-I$,
$\boldsymbol{B}^{\top}$ refers to the transpose of $\boldsymbol{B}$ and $\boldsymbol{A}$ is the adjacency matrix of the induced subgraph $G-I$. Then it can be checked that
$$r(G, \varphi)\leq r(\mathbf{0}, \boldsymbol{B})+r(\boldsymbol{B}^{\top}, \boldsymbol{A})\leq n-\alpha(G)+n-\alpha(G)=2n-2\alpha(G).$$
Thus,
$$r(G, \varphi)\leq 2n-2\alpha(G).$$

Next, we argue by induction on $c(G)$ to show that $2n-2c(G) \leq r(G, \varphi)+2\alpha(G) $.
If $c(G)=0$, then $(G, \varphi)$ is a complex unit gain tree, and so result follows from Lemma \ref{L052}.
Hence one can assume that $c(G) \geq 1$. Let
$u$ be a vertex on some cycle of $(G, \varphi)$ and $(G', \varphi)=(G, \varphi)-u$.
Let $(G_{1}, \varphi), (G_{2}, \varphi), \cdots, (G_{l}, \varphi)$ be all
connected components of $(G', \varphi)$.
By Lemma \ref{L23}, we have
\begin{equation} \label{E1}
\sum_{i=1}^{l}c(G_{i})=c(G') \leq c(G)-1.
\end{equation}
By the induction hypothesis, one has
\begin{equation} \label{E2}
2(n-1)-2c(G') \leq r(G', \varphi)+2\alpha(G').
\end{equation}
By Lemmas \ref{L054} and \ref{L14}, we have
\begin{equation} \label{E3}
\sum_{i=1}^{l}\alpha(G_{i})=\alpha(G') \leq \alpha(G)
\end{equation}
and
\begin{equation} \label{E4}
\sum\limits_{i=1}^{l}r(G_{i}, \varphi)=r(G', \varphi) \leq r(G, \varphi).
\end{equation}
Thus the desired inequality now follows by combining (\ref{E1}), (\ref{E2}),  (\ref{E3}) and (\ref{E4}),
\begin{eqnarray} \label{E0}
r(G, \varphi)+2\alpha(G) & \ge & r(G', \varphi)+2\alpha(G')
\\ \nonumber
& \ge & 2(n-1)-2c(G')
\\ \nonumber
& \geq & 2(n - 1) - 2(c(G) - 1) = 2n-2c(G),
\end{eqnarray}
as desired.

This completes the proof of Theorem \ref{T30}.
$\square$

\section{Proof of Theorem \ref{T50}.}

A complex unit gain graph $(G, \varphi)$ with order $n$ is called {\it lower-optimal} if $r(G, \varphi)=2n-2c(G)-2\alpha(G)$,
or equivalently, the complex unit gain graph which attain the lower bound in Theorem \ref{T30}.
In this section, we characterize the properties of the complex unit gain graphs which are lower-optimal,
and then we give the proof for Theorem \ref{T50}.

\begin{lemma} \label{L001}
Let $u$ be a cut vertex of a complex unit gain graph $(G, \varphi)$ and $(H, \varphi)$ be a component
of $(G, \varphi)-u$. If $r(H, \varphi)=r((H, \varphi)+u)$, then $r(G, \varphi)=r(H, \varphi)+r((G, \varphi)-(H, \varphi))$.
\end{lemma}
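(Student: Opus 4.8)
The plan is to use a block decomposition of $A(G,\varphi)$ relative to the cut vertex $u$, together with a rank inequality that becomes an equality precisely under the stated hypothesis. Write $(K,\varphi) = (G,\varphi) - (H,\varphi)$, so that $V(G)$ is partitioned into $V(H)$, $\{u\}$, and $V(K)\setminus\{u\}$. Since $u$ is a cut vertex separating $H$ from the rest, there are no edges between $V(H)$ and $V(K)\setminus\{u\}$. Ordering the vertices as ($V(H)$, then $u$, then $V(K)\setminus\{u\}$), the Hermitian matrix $A(G,\varphi)$ has the arrowhead-type block form
\begin{equation*}
A(G,\varphi)=\begin{pmatrix} A(H,\varphi) & \boldsymbol{\beta} & \mathbf{0}\\ \boldsymbol{\beta}^{*} & 0 & \boldsymbol{\gamma}^{*}\\ \mathbf{0} & \boldsymbol{\gamma} & \boldsymbol{A}' \end{pmatrix},
\end{equation*}
where $\boldsymbol{\beta}$ is the column of gains on edges from $u$ into $H$, and the lower-right $2\times 2$ block is exactly $A((G,\varphi)-(H,\varphi)) = A(K,\varphi)$.

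The key step is the hypothesis $r(H,\varphi) = r((H,\varphi)+u)$. The matrix $A((H,\varphi)+u)$ is the top-left $(|V(H)|+1)\times(|V(H)|+1)$ principal submatrix $\left(\begin{smallmatrix} A(H,\varphi) & \boldsymbol{\beta}\\ \boldsymbol{\beta}^{*} & 0\end{smallmatrix}\right)$. The condition that adjoining the row and column indexed by $u$ does not raise the rank forces $\boldsymbol{\beta}$ to lie in the column space of $A(H,\varphi)$, i.e.\ $\boldsymbol{\beta} = A(H,\varphi)\,\boldsymbol{x}$ for some vector $\boldsymbol{x}$, and moreover $\boldsymbol{\beta}^{*}\boldsymbol{x} = \boldsymbol{x}^{*}A(H,\varphi)\boldsymbol{x} = 0$ (otherwise the enlarged matrix would have strictly larger rank, since a nonzero value there together with $\boldsymbol{\beta}$ in the column space would still contribute). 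I would make this precise via the standard fact that for a Hermitian matrix $\left(\begin{smallmatrix} M & b\\ b^{*} & d\end{smallmatrix}\right)$ one has $\operatorname{rank}\left(\begin{smallmatrix} M & b\\ b^{*} & d\end{smallmatrix}\right) = \operatorname{rank} M$ if and only if $b \in \operatorname{col}(M)$ and $d = b^{*}M^{+}b$ (with $M^{+}$ the pseudoinverse); combined with $r(H,\varphi) = r((H,\varphi)+u)$ this yields $b = \boldsymbol{\beta} \in \operatorname{col}(A(H,\varphi))$ and $\boldsymbol{\beta}^{*}A(H,\varphi)^{+}\boldsymbol{\beta} = 0$.

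Using $\boldsymbol{\beta} = A(H,\varphi)\boldsymbol{x}$, I perform simultaneous Hermitian row/column operations (congruence transformations, which preserve rank): add $-\boldsymbol{x}^{*}$ times the $H$-block of rows to the $u$-row, and symmetrically for columns. This clears $\boldsymbol{\beta}$ and $\boldsymbol{\beta}^{*}$ entirely, while the $(u,u)$ entry becomes $0 - \boldsymbol{x}^{*}\boldsymbol{\beta} - \boldsymbol{\beta}^{*}\boldsymbol{x} + \boldsymbol{x}^{*}A(H,\varphi)\boldsymbol{x} = -\boldsymbol{\beta}^{*}\boldsymbol{x} = -\boldsymbol{\beta}^{*}A(H,\varphi)^{+}\boldsymbol{\beta} = 0$, and the $(u, V(K)\setminus\{u\})$ block $\boldsymbol{\gamma}^{*}$ is untouched because the $H$-rows have zeros there. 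After this reduction $A(G,\varphi)$ is congruent to $A(H,\varphi) \oplus \left(\begin{smallmatrix} 0 & \boldsymbol{\gamma}^{*}\\ \boldsymbol{\gamma} & \boldsymbol{A}'\end{smallmatrix}\right) = A(H,\varphi)\oplus A(K,\varphi)$, whence $r(G,\varphi) = r(H,\varphi) + r(K,\varphi) = r(H,\varphi) + r((G,\varphi)-(H,\varphi))$.

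The main obstacle is the linear-algebra lemma isolating exactly what $r(H,\varphi) = r((H,\varphi)+u)$ buys us — namely that $\boldsymbol{\beta}$ lies in the column space of $A(H,\varphi)$ \emph{and} the associated diagonal entry is forced to vanish. One must be careful that the second condition is automatic: if $\boldsymbol{\beta} \in \operatorname{col}(A(H,\varphi))$ but $\boldsymbol{\beta}^{*}A(H,\varphi)^{+}\boldsymbol{\beta} \neq 0$, the bordered matrix would have rank $r(H,\varphi)+1$, contradicting the hypothesis; so the hypothesis delivers both facts at once. An alternative, perhaps cleaner route avoiding pseudoinverses is to argue directly: $r((H,\varphi)+u) = r(H,\varphi)$ means the last row of the bordered Hermitian matrix is a linear combination of the earlier rows, say with coefficient vector $\boldsymbol{x}$ on the $H$-rows and necessarily coefficient $0$ on its own row (a Hermitian matrix cannot have its own last row as a nonzero multiple of itself plus earlier rows in a way that is consistent on the diagonal unless that coefficient is handled separately — this needs a short check), giving $\boldsymbol{\beta}^{*} = \boldsymbol{x}^{*}A(H,\varphi)$ and $0 = \boldsymbol{x}^{*}\boldsymbol{\beta}$ simultaneously, which is exactly what the congruence step needs. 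Either way the bulk of the work is this one structural observation, after which the block-diagonalization and the final rank additivity are routine, invoking Lemma~\ref{L16}(ii) implicitly.
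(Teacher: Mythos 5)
Your proposal is correct and follows essentially the same route as the paper: the same arrowhead block decomposition at the cut vertex $u$, the same observation that $r(H,\varphi)=r((H,\varphi)+u)$ forces $\boldsymbol{\beta}$ into the column space of $A(H,\varphi)$, and the same rank-preserving congruence $\overline{Q}^{\top}A(G,\varphi)Q$ that clears the border and leaves $-\boldsymbol{\beta}^{*}\boldsymbol{x}$ on the diagonal, which the hypothesis forces to be $0$. Your extra care in justifying why that diagonal entry must vanish (via the bordered-Hermitian rank criterion) fills in a step the paper merely asserts.
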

\begin{proof}
Let $|V(H, \varphi)|=k$ and
$$A(G, \varphi) = \left(
      \begin{array}{ccc}
        \boldsymbol{A} & \mathbf{\beta} & \mathbf{0} \\
        \mathbf{\overline{\beta}^{\top}} & 0 & \mathbf{\gamma} \\
        \boldsymbol{0} &  \mathbf{\overline{\gamma}^{\top}} & \boldsymbol{B} \\
      \end{array}
    \right),
$$
where $\boldsymbol{A}$ and $\boldsymbol{B}$ are the Hermitian adjacency matrices of $(H, \varphi)$ and $(G, \varphi)-(H, \varphi)-u$, respectively. $\mathbf{\overline{\beta}^{\top}}$ refers to the conjugate transpose of $\mathbf{\beta}$. Since $r(H, \varphi)=r((H, \varphi)+u)$, the linear equation
$\boldsymbol{A}X=\mathbf{\beta}$ has solutions. Let $\xi$ be a solution of $\boldsymbol{A}X=\mathbf{\beta}$, and put
$$Q = \left(
      \begin{array}{ccc}
        \boldsymbol{E_{k}} & -\xi & \mathbf{0} \\
        \mathbf{0} & 1 & \mathbf{0} \\
        \mathbf{0} &  \mathbf{0} & \boldsymbol{I_{n-k-1}} \\
      \end{array}
    \right),
$$
where $\boldsymbol{I_{k}}$ denotes a $k \times k$ identity matrix. By directly calculation, we have
$$\overline{Q}^{\top}A(G, \varphi)Q = \left(
      \begin{array}{ccc}
        \boldsymbol{A} & \mathbf{0} & \mathbf{0} \\
        \mathbf{0} & -\overline{\mathbf{\beta}}^{\top}\xi & \mathbf{\gamma} \\
        \mathbf{0} &  \overline{\mathbf{\gamma}}^{\top} & \boldsymbol{B} \\
      \end{array}
    \right).
$$
Since $r(H, \varphi)=r((H, \varphi)+u)$, we have $-\overline{\mathbf{\beta}}^{\top}\xi=0$. Thus we have
$r(G, \varphi)=r(H, \varphi)+r((G, \varphi)-(H, \varphi))$.
\end{proof}

\begin{lemma} \label{L002}
Let $(C_{l}, \varphi)$ be a pendant complex unit gain cycle of a complex unit gain graph $(G, \varphi)$
with  $u$ be the only vertex of $(C_{l}, \varphi)$ of degree 3. Let $(H, \varphi)=(G, \varphi)-(C_{l}, \varphi)$ and $(G', \varphi)=(H, \varphi)+u$. If $Re((-1)^{\frac{l-1}{2}}\varphi(C_{l}, \varphi))=0$ and $l$ is odd, then
$$r(G, \varphi)=r(G', \varphi)+l-1.$$
\end{lemma}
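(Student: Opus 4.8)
The plan is to reduce the claim to a single application of Lemma~\ref{L001}. Write $(P_{l-1},\varphi) := (C_{l},\varphi)-u$, the complex unit gain path obtained from the pendant cycle by deleting its unique vertex of degree $3$. Since $(C_{l},\varphi)$ is a pendant cycle and $u$ is its only degree-$3$ vertex, $u$ is a cut vertex of $(G,\varphi)$, and $(P_{l-1},\varphi)$ is precisely the connected component of $(G,\varphi)-u$ coming from the cycle; the remaining components of $(G,\varphi)-u$ together with $u$ form exactly $(H,\varphi)+u=(G',\varphi)$. Observe also that the induced subgraph of $(G,\varphi)$ on the vertex set $V(P_{l-1})\cup\{u\}$ is $(C_{l},\varphi)$ itself, that is, $(P_{l-1},\varphi)+u=(C_{l},\varphi)$.

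Next I would check the hypothesis of Lemma~\ref{L001} for the component $(P_{l-1},\varphi)$, namely $r(P_{l-1},\varphi)=r\big((P_{l-1},\varphi)+u\big)=r(C_{l},\varphi)$. Because $l$ is odd, $l-1$ is even, so Lemma~\ref{LPN} gives $r(P_{l-1},\varphi)=l-1$. On the other hand, the assumption $Re\big((-1)^{(l-1)/2}\varphi(C_{l},\varphi)\big)=0$ together with $l$ odd says exactly that $(C_{l},\varphi)$ is of Type~E, so Lemma~\ref{L12} gives $p^{+}(C_{l},\varphi)=n^{-}(C_{l},\varphi)=\frac{l-1}{2}$, whence $r(C_{l},\varphi)=l-1$. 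Thus the two ranks agree.

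Finally, applying Lemma~\ref{L001} with cut vertex $u$ and component $(P_{l-1},\varphi)$ yields
$$r(G,\varphi)=r(P_{l-1},\varphi)+r\big((G,\varphi)-(P_{l-1},\varphi)\big)=(l-1)+r(G',\varphi),$$
using $r(P_{l-1},\varphi)=l-1$ and the identification $(G,\varphi)-(P_{l-1},\varphi)=(G',\varphi)$ from the first paragraph. This is the desired identity. The only points that need care are the bookkeeping of induced subgraphs --- that deleting $V(P_{l-1})$ from $(G,\varphi)$ leaves precisely $(H,\varphi)$ together with the attachment vertex $u$ and its edges into $H$ --- and noticing that the Type~E hypothesis is exactly what forces $r(C_{l},\varphi)=r(P_{l-1},\varphi)$, which is what makes Lemma~\ref{L001} applicable; beyond invoking Lemmas~\ref{L001}, \ref{LPN} and \ref{L12} there is no substantial analytic obstacle.
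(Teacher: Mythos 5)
Your proof is correct and follows essentially the same route as the paper: identify $u$ as a cut vertex with $(P_{l-1},\varphi)=(C_{l},\varphi)-u$ as a component of $(G,\varphi)-u$, verify $r(P_{l-1},\varphi)=r(C_{l},\varphi)=l-1$ via Lemmas~\ref{LPN} and \ref{L12} (Type~E), and conclude by Lemma~\ref{L001}. Your write-up is in fact slightly more careful than the paper's about the bookkeeping identification $(G,\varphi)-(P_{l-1},\varphi)=(G',\varphi)$.
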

\begin{proof} Note that $u$ is a cut vertex of $(G, \varphi)$ and $(P_{l-1}, \varphi)$ is a complex unit gain path as a component of $(G, \varphi)-u$.
By the fact that $Re((-1)^{\frac{l-1}{2}}\varphi(C_{l}, \varphi))=0$ and $l$ is odd, then by Lemmas \ref{LPN} and \ref{L12} one has that
$$r(P_{l-1}, \varphi)=r(C_{l}, \varphi)=l-1.$$

Then, by Lemma \ref{L001}, we have
$$r(G, \varphi)=r(G', \varphi)+r(P_{l-1}, \varphi)=r(G', \varphi)+l-1.$$
\end{proof}

By Lemma \ref{L12}, the following Lemma \ref{L50} can be obtained directly.

\begin{lemma} \label{L50}
The complex unit gain cycle $(C_{q}, \varphi)$ is lower-optimal if and only if either $\varphi(C_{q}, \varphi)=(-1)^{\frac{q}{2}}$ and $q$ is even or $Re((-1)^{\frac{q-1}{2}}\varphi(C_{q}, \varphi))=0$ and $q$ is odd.
\end{lemma}

\begin{lemma} \label{L000} Let $(G, \varphi)$ be a complex unit gain graph and $u$ be a vertex of $(G, \varphi)$ lying on a complex unit gain cycle. If $r(G, \varphi)= 2n-2c(G)-2\alpha(G)$, then each of the following holds.
\\
{\em(i)} $r(G, \varphi)=r((G, \varphi)-u)$;
\\
{\em(ii)} $(G, \varphi)-u$ is lower-optimal;
\\
{\em(iii)} $c(G)=c(G-u)+1$;
\\
{\em(iv)} $\alpha(G)=\alpha(G-u)$;
\\
{\em(v)} $u$ lies on just one complex unit gain cycle of $(G, \varphi)$ and $u$ is not a quasi-pendant vertex of $(G, \varphi)$.
\end{lemma}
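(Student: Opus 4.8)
The plan is to combine the three monotonicity estimates used to prove the lower bound of Theorem \ref{T30} with the hypothesis that equality holds for $(G,\varphi)$, forcing each of those estimates to be tight. Concretely, recall from the proof of Theorem \ref{T30} that for any vertex $u$ lying on a cycle, writing $(G',\varphi)=(G,\varphi)-u$ and letting $(G_1,\varphi),\dots,(G_l,\varphi)$ be its components, one has the chain
\begin{equation*}
r(G,\varphi)+2\alpha(G)\ \ge\ r(G',\varphi)+2\alpha(G')\ \ge\ 2(n-1)-2c(G')\ \ge\ 2n-2c(G).
\end{equation*}
Since the two ends are equal by hypothesis, every inequality in this chain is an equality. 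From $2(n-1)-2c(G')=2n-2c(G)$ we get $c(G')=c(G)-1$, i.e. part (iii), and then by Lemma \ref{L23}(i),(iii) the vertex $u$ cannot lie outside all cycles and cannot be a common vertex of two cycles, so $u$ lies on exactly one cycle of $(G,\varphi)$; this is the first half of (v). From $r(G,\varphi)+2\alpha(G)=r(G',\varphi)+2\alpha(G')$ together with the individual bounds $r(G',\varphi)\le r(G,\varphi)$ (Lemma \ref{L14}) and $\alpha(G')\le\alpha(G)$ (Lemma \ref{L054}(i)), each of those must be an equality as well, giving $r(G,\varphi)=r(G',\varphi)$, which is (i), and $\alpha(G)=\alpha(G-u)$, which is (iv).

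For part (ii), I would use (i), (iii), (iv) and the equality hypothesis directly: $(G',\varphi)$ has order $n-1$, and
\begin{equation*}
r(G',\varphi)=r(G,\varphi)=2n-2c(G)-2\alpha(G)=2(n-1)-2\bigl(c(G)-1\bigr)-2\alpha(G)=2(n-1)-2c(G')-2\alpha(G'),
\end{equation*}
so $(G',\varphi)$ attains the lower bound of Theorem \ref{T30}, i.e. it is lower-optimal.

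It remains to show the second half of (v): $u$ is not a quasi-pendant vertex. I would argue by contradiction. Suppose $u$ is adjacent to a pendant vertex $y$ of $(G,\varphi)$. Then Lemma \ref{L13} gives $r(G,\varphi)=r((G,\varphi)-\{u,y\})+2$, while (i) gives $r(G,\varphi)=r((G,\varphi)-u)$; but $y$ is isolated in $(G,\varphi)-u$, so $r((G,\varphi)-u)=r((G,\varphi)-\{u,y\})$ by Lemma \ref{L16}(ii),(iii), yielding $r((G,\varphi)-\{u,y\})=r((G,\varphi)-\{u,y\})+2$, a contradiction. Hence $u$ is not a quasi-pendant vertex, completing (v). The main obstacle is really just bookkeeping: making sure the equality in the displayed chain is correctly propagated back through each of the three separate lemmas (Lemma \ref{L14}, Lemma \ref{L054}, Lemma \ref{L23}) that were applied to the components of $(G,\varphi)-u$, and in particular noting that $\sum_i \alpha(G_i)=\alpha(G')$ and $\sum_i r(G_i,\varphi)=r(G',\varphi)$ so that tightness of the sum forces tightness vertexwise. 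Everything else is a short deduction from results already established in the excerpt.
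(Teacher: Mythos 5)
Your proposal is correct and follows essentially the same route as the paper: the paper likewise observes that equality at both ends of the inequality chain from the proof of Theorem \ref{T30} forces every intermediate inequality to be tight, yielding (i)--(iv), and then derives (v) from (iii) together with Lemma \ref{L23} and from Lemma \ref{L13} via the same contradiction with (i). Your write-up is in fact somewhat more explicit than the paper's (which compresses (i)--(iv) into one sentence), and the only cosmetic quibble is that $\sum_i\alpha(G_i)=\alpha(G')$ and $\sum_i r(G_i,\varphi)=r(G',\varphi)$ are identities rather than inequalities needing to be "forced tight."
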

\begin{proof}
In the proof arguments of Theorem \ref{T30} that justifies $r(G, \varphi)+2\alpha(G) \geq 2n-2c(G)$.
If both ends of (\ref{E0}) in the proof of Theorem \ref{T30} are the same, then all inequalities in (\ref{E0}) must be equalities, and so Lemma \ref{L000} (i)-(iv) are observed.

To prove (v). By Lemma \ref{L000} (iii) and Lemma \ref{L23}, we conclude that $u$ lies on just one
complex unit gain cycle of $(G, \varphi)$.
Suppose to the contrary that $u$ is a quasi-pendant vertex which adjacent to a pendant vertex $v$.
Then by Lemma \ref{L13}, we have
$$r((G, \varphi)-u)=r((G, \varphi)- \{ u, v  \})=r(G, \varphi)-2,$$
which is a contradiction to (i). This completes the proof of the lemma.
\end{proof}

\begin{lemma} \label{L51}
Let $(G, \varphi)$ be a complex unit gain graph and $(G_{1}, \varphi), (G_{2}, \varphi), \cdots, (G_{k}, \varphi)$ be all connected components of $(G, \varphi)$. Then $(G, \varphi)$ is lower-optimal if and only if $(G_{j}, \varphi)$ is lower-optimal for
each $j \in \{1, 2, \cdots, k \}$.
\end{lemma}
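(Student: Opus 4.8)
The plan is to prove Lemma \ref{L51} by reducing the statement about disconnected graphs to the defining equation of lower-optimality and showing it decomposes additively across components. Recall that $(G,\varphi)$ with order $n$ is lower-optimal precisely when $r(G,\varphi) = 2n - 2c(G) - 2\alpha(G)$, and by Theorem \ref{T30} this is equivalent to saying the quantity $\delta(G,\varphi) := r(G,\varphi) + 2\alpha(G) - (2n - 2c(G))$, which is always $\geq 0$, equals $0$.

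First I would record that each of the three graph parameters involved is additive over connected components: if $(G_1,\varphi),\dots,(G_k,\varphi)$ are the components of $(G,\varphi)$ with orders $n_1,\dots,n_k$, then $n = \sum_i n_i$ trivially, $\alpha(G) = \sum_i \alpha(G_i)$ since a maximum independent set of $G$ is the union of maximum independent sets of the components, and $c(G) = \sum_i c(G_i)$ because the cyclomatic number $|E| - |V| + \omega$ splits as a sum over components (each component contributes $|E(G_i)| - |V(G_i)| + 1$). The rank is additive by Lemma \ref{L16}(ii): $r(G,\varphi) = \sum_{i=1}^{k} r(G_i,\varphi)$. Combining these four identities gives $\delta(G,\varphi) = \sum_{i=1}^{k} \delta(G_i,\varphi)$.

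Then I would finish as follows. By Theorem \ref{T30} applied to each component, every summand $\delta(G_i,\varphi)$ is nonnegative. Hence the sum $\delta(G,\varphi) = \sum_i \delta(G_i,\varphi)$ is zero if and only if every individual term $\delta(G_i,\varphi)$ is zero. Translating back through the definition, $(G,\varphi)$ is lower-optimal if and only if $(G_j,\varphi)$ is lower-optimal for each $j \in \{1,2,\dots,k\}$, which is the claim. A routine induction reduces the general case to $k=2$, or one can simply argue directly with the $k$-fold sum.

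This proof is essentially bookkeeping, so there is no serious obstacle; the only point requiring a moment's care is confirming the additivity of $c(G)$ over components and the additivity of $\alpha(G)$, both of which are standard but should be stated cleanly so the decomposition $\delta(G,\varphi) = \sum_i \delta(G_i,\varphi)$ is transparent. The nonnegativity of each $\delta(G_i,\varphi)$ — which is exactly what makes "sum is zero iff each term is zero" work — is precisely the content of the lower bound in Theorem \ref{T30}, so invoking that theorem componentwise is the key conceptual step.
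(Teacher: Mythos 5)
Your proposal is correct and is essentially the same argument as the paper's: the paper likewise uses additivity of $r$, $\alpha$, $c$, and the order over components together with the componentwise lower bound from Theorem \ref{T30}, phrasing the necessity direction as a proof by contradiction rather than via your defect function $\delta$. No issues.
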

\begin{proof} (Sufficiency.) For each $i \in \{ 1, 2, \cdots, k \}$, one has that
$$r(G_{i}, \varphi)+2\alpha(G_{i})= 2|V(G_{i})|-2c(G_{i}).$$
Then, one has that
\begin{eqnarray*}
r(G, \varphi)&=&\sum\limits_{j=1}^{k}r(G_{j}, \varphi)\\
&=&\sum\limits_{j=1}^{k}[2|V(G_{i})|-2c(G_{i})-2\alpha(G_{i})]\\
&=&2|V(G)|-2c(G)-2\alpha(G).
\end{eqnarray*}

(Necessity.) Suppose to the contrary that there is a connected component of $(G, \varphi)$, say $(G_{1}, \varphi)$, which is not lower-optimal. By Theorem \ref{T30}, one has that
$$r(G_{1}, \varphi)+2\alpha(G_{1}) > 2|V(G_{1})|-2c(G_{1})$$
and for each $ j \in \{ 2, 3, \cdots, k \}$, we have
$$r(G_{j}, \varphi) +2\alpha(G_{j}) \geq 2|V(G_{j})|-2c(G_{j}).$$
Thus, one has that
$$r(G, \varphi)+2\alpha(G) > 2|V(G)|-2c(G),$$
a contradiction.
\end{proof}

\begin{lemma} \label{L56}
Let $u$ be a pendant vertex of a complex unit gain graph $(G, \varphi)$ and $v$ be the vertex which adjacent to $u$. Let $(G_{0}, \varphi)=(G, \varphi)-\{ u, v \}$. Then $(G, \varphi)$ is lower-optimal if and only if $v$ is not on any complex unit gain cycle of $(G, \varphi)$ and $(G_{0}, \varphi)$ is lower-optimal.

\end{lemma}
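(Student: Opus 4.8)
The plan is to push everything through the parameter changes caused by deleting the pendant edge $uv$ and then compare with Theorem~\ref{T30}. Throughout write $n=|V(G)|$, so that $|V(G_{0})|=n-2$.

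First I would record the three basic identities. By Lemma~\ref{L13}, $r(G,\varphi)=r(G_{0},\varphi)+2$. By Lemma~\ref{L053} (the pendant-vertex identity for the independence number), $\alpha(G)=\alpha(G-v)=\alpha(G_{0})+1$. For the cyclomatic number, note that $u$ is a pendant vertex and hence lies on no cycle, so Lemma~\ref{L23}(i) gives $c(G)=c(G-u)$; moreover, since $u$ has degree one, the cycles of $G-u$ are exactly the cycles of $G$, so $v$ lies on a cycle of $G-u$ precisely when $v$ lies on a complex unit gain cycle of $(G,\varphi)$. Now $G_{0}=(G-u)-v$, so Lemma~\ref{L23}(i)--(ii) gives $c(G_{0})=c(G)$ when $v$ lies on no cycle of $(G,\varphi)$ and $c(G_{0})\le c(G)-1$ otherwise; in particular $c(G_{0})\le c(G)$ always, with equality exactly when $v$ is on no complex unit gain cycle of $(G,\varphi)$.

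Next I would substitute these into the definition of lower-optimality. Replacing $r(G,\varphi)$, $n$ and $\alpha(G)$ in the equation $r(G,\varphi)=2n-2c(G)-2\alpha(G)$ by the expressions above and simplifying shows that $(G,\varphi)$ is lower-optimal if and only if
$$r(G_{0},\varphi)=2|V(G_{0})|-2c(G)-2\alpha(G_{0}).$$
On the other hand, Theorem~\ref{T30} applied to $(G_{0},\varphi)$ gives $r(G_{0},\varphi)\ge 2|V(G_{0})|-2c(G_{0})-2\alpha(G_{0})$, and together with $c(G_{0})\le c(G)$ this yields
$$r(G_{0},\varphi)\ \ge\ 2|V(G_{0})|-2c(G_{0})-2\alpha(G_{0})\ \ge\ 2|V(G_{0})|-2c(G)-2\alpha(G_{0}).$$
Hence the displayed equality holds if and only if both inequalities above are equalities, that is, if and only if $(G_{0},\varphi)$ is lower-optimal and $c(G_{0})=c(G)$; by the previous paragraph the second condition is precisely that $v$ lies on no complex unit gain cycle of $(G,\varphi)$. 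This is the assertion of the lemma, and it is consistent in the degenerate case $G=P_{2}$, where $G_{0}$ is the empty graph and all the quantities involved vanish.

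I do not anticipate a genuine difficulty; the only step needing a little care is the cyclomatic bookkeeping, namely verifying that removing the pendant vertex $u$ leaves unchanged the set of cycles through $v$, so that the hypothesis $c(G_{0})=c(G)$ can be phrased intrinsically in terms of $(G,\varphi)$ as the condition that $v$ lies on no complex unit gain cycle.
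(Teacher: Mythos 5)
Your proof is correct and follows essentially the same route as the paper: both directions come down to the identities $r(G,\varphi)=r(G_{0},\varphi)+2$ (Lemma~\ref{L13}), $\alpha(G)=\alpha(G_{0})+1$ (Lemma~\ref{L053}), the bound $c(G_{0})\le c(G)$ with equality exactly when $v$ avoids all cycles (Lemma~\ref{L23}), and a comparison with the lower bound of Theorem~\ref{T30} applied to $(G_{0},\varphi)$. Your packaging of the two directions into a single chain of equivalences, together with the explicit check that deleting the pendant vertex $u$ does not alter the cycles through $v$, is if anything slightly more careful than the paper's version.
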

\begin{proof}
(Sufficiency.) Since $v$ is not on any complex unit gain cycle, by Lemma \ref{L23}, we have
$c(G)=c(G_{0})$.
By Lemmas \ref{L13} and \ref{L053}, one has that
$$r(G, \varphi)=r(G_{0}, \varphi)+2, \alpha(G)=\alpha(G_{0})+1.$$
Thus, one can get $(G, \varphi)$ is lower-optimal by the condition that $(G_{0}, \varphi)$ is lower-optimal.

(Necessity.) By Lemmas \ref{L13} and \ref{L000} and the condition that $(G, \varphi)$ is lower-optimal,
it can be checked that
$$r(G_{0}, \varphi)+2\alpha(G_{0})=2|V(G_{0})|-2c(G).$$
It follows from Theorem \ref{T30} that one has
$$r(G_{0}, \varphi)+2\alpha(G_{0}) \geq 2|V(G_{0})|-2c(G_{0}).$$
By the fact that $c(G_{0}) \leq c(G)$, then we have
$$c(G)=c(G_{0}), r(G_{0}, \varphi)+2\alpha(G_{0})=2|V(G_{0})|-2c(G_{0}).$$
Thus $(G_{0}, \varphi)$ is also lower-optimal and $v$ is not on any complex unit gain cycle of $(G, \varphi)$.
\end{proof}

\begin{lemma} \label{L55}
Let $(G, \varphi)$ be a complex unit gain graph obtained by joining a vertex $x$ of a complex unit gain cycle $(C_{l}, \varphi)$ by
an edge to a vertex $y$ of a complex unit gain connected graph $(K, \varphi)$. If $(G, \varphi)$ is lower-optimal, then the following properties hold for $(G, \varphi)$.

{\em(i)}  For each complex unit gain cycle $(C_{q}, \varphi)$ of $(G, \varphi)$, either $\varphi(C_{q}, \varphi)=(-1)^{\frac{q}{2}}$ and $q$ is even or $Re((-1)^{\frac{q-1}{2}}\varphi(C_{q}, \varphi))=0$ and $q$ is odd;

{\em(ii)} If $\varphi(C_{l}, \varphi)=(-1)^{\frac{l}{2}}$ and $l$ is even, then $r(G, \varphi)=l-2+r(K, \varphi)$ and $\alpha(G)=\frac{l}{2}+\alpha(K)$; if $Re((-1)^{\frac{l-1}{2}}\varphi(C_{l}, \varphi))=0$ and $l$ is odd, then
$r(G, \varphi)=l-1+r(K, \varphi)$ and $\alpha(G)=\frac{l-1}{2}+\alpha(K)$.

{\em(iii)} $(K, \varphi)$ is lower-optimal;

{\em(iv)} Let $(G', \varphi)$ be the induced complex unit gain subgraph of $(G, \varphi)$ with vertex set $V(K)\cup \{ x\}$. Then $(G', \varphi)$ is also lower-optimal;

{\em(v)} $\alpha(G')=\alpha(K)+1$ and $r(G', \varphi)=r(K, \varphi)$.
\end{lemma}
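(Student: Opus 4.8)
The plan is to prove the five properties of Lemma~\ref{L55} in the order listed, since each one builds on the previous ones, using the earlier structural lemmas (especially Lemmas~\ref{L001}, \ref{L50}, \ref{L56}, and \ref{L000}) together with Theorem~\ref{T30}.

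For (i): since $(G,\varphi)$ is lower-optimal and $x$ is a cut vertex lying on the cycle $(C_l,\varphi)$, I would first apply Lemma~\ref{L000} (with $u=x$) to deduce that $x$ lies on exactly one cycle of $(G,\varphi)$; hence the cycle $(C_l,\varphi)$ is vertex-disjoint from all cycles inside $(K,\varphi)$, and more generally all cycles of $(G,\varphi)$ are pairwise vertex-disjoint. It then suffices to show that every cycle $(C_q,\varphi)$ of $(G,\varphi)$ is of Type~A or Type~E (in the terminology of Definition~\ref{L053}). Suppose some cycle $(C_q,\varphi)$ were of Type~B, C, or D. I would delete vertices one at a time from $(G,\varphi)$ along a path leading to this cycle, repeatedly invoking Lemma~\ref{L000}(ii): each deletion of a vertex on a cycle preserves lower-optimality, and eventually we reduce to a lower-optimal graph containing $(C_q,\varphi)$ as a pendant cycle or as a component. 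Combining Lemma~\ref{L50}, Lemma~\ref{L51}, and Lemma~\ref{L12} then forces $(C_q,\varphi)$ to be of Type~A or Type~E, a contradiction. (Alternatively, one isolates $(C_q,\varphi)$ as a component after vertex deletions and applies Lemma~\ref{L51} and Lemma~\ref{L50}.)

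For (ii)--(v): with (i) in hand, the cycle $(C_l,\varphi)$ has gain satisfying $\varphi(C_l,\varphi)=(-1)^{l/2}$ ($l$ even) or $Re((-1)^{(l-1)/2}\varphi(C_l,\varphi))=0$ ($l$ odd). I would compute $r(C_l,\varphi)$ directly from Lemma~\ref{L12}: it equals $l-2$ in the even case and $l-1$ in the odd case. Since $x$ is a cut vertex and $(C_l-x,\varphi)$ is a complex unit gain path $(P_{l-1},\varphi)$ which is a component of $(G,\varphi)-x$, I would check that $r(C_l,\varphi)=r(P_{l-1},\varphi)+1$ fails in general but that instead the relevant identity is $r(P_{l-1},\varphi)=r(C_l,\varphi)$ in both cases (using Lemma~\ref{LPN}); this is precisely the hypothesis needed to apply Lemma~\ref{L001} with $(H,\varphi)$ the path-part, or more cleanly, to mimic the argument of Lemma~\ref{L002}. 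This yields $r(G,\varphi)=r(G',\varphi)+r(P_{l-1},\varphi)$ where $(G',\varphi)$ is the subgraph on $V(K)\cup\{x\}$, and since $(G',\varphi)$ is $(K,\varphi)$ with a pendant path of the appropriate parity attached at $y$ via $x$, Lemmas~\ref{L13} and \ref{L15} give the stated formulas $r(G',\varphi)=r(K,\varphi)$ and, via Lemma~\ref{L053}, $\alpha(G')=\alpha(K)+1$ — this is (v). Then (ii) follows by combining these with the value of $r(P_{l-1},\varphi)$ and the analogous independence-number bookkeeping $\alpha(G)=\alpha(C_l)+\alpha(K)$ (noting $\alpha(C_l)=\lfloor l/2\rfloor$, which is $l/2$ or $(l-1)/2$ as appropriate). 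For (iii) and (iv), I would feed the formulas from (ii) into the defining equation $r(G,\varphi)=2n-2c(G)-2\alpha(G)$: substituting $n=|V(K)|+l$, $c(G)=c(K)+1$, and the expressions for $r(G,\varphi)$ and $\alpha(G)$, the equation reduces algebraically to $r(K,\varphi)=2|V(K)|-2c(K)-2\alpha(K)$, i.e.\ $(K,\varphi)$ is lower-optimal; a parallel computation with $|V(G')|=|V(K)|+1$ and $c(G')=c(K)$ gives lower-optimality of $(G',\varphi)$.

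The main obstacle I anticipate is part (i): one must show \emph{every} cycle of $(G,\varphi)$ — not just $(C_l,\varphi)$ — is of Type~A or E, and the cycles inside $(K,\varphi)$ are not directly pendant, so extracting the contradiction requires carefully iterating Lemma~\ref{L000}(ii) to strip down to a configuration where Lemmas~\ref{L50}--\ref{L51} apply, while tracking that each intermediate graph remains lower-optimal. The bookkeeping in (ii)--(v) is routine once the correct rank identity $r(C_l,\varphi)=r(P_{l-1},\varphi)$ is established and Lemma~\ref{L001} is invoked, but one must be careful with the two parity cases throughout.
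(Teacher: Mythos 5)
Your overall architecture matches the paper's: use Lemma~\ref{L000} to control what happens when cycle vertices are deleted, use the cut-vertex rank decomposition (Lemma~\ref{L001}, generalizing Lemma~\ref{L002} to both parities via $r(P_{l-1},\varphi)=r(C_l,\varphi)$ for Types A and E), and then close (iii) and (iv) by algebra. That unified use of Lemma~\ref{L001} for both parities is in fact cleaner than the paper, which treats the even case by deleting a neighbour $s$ of $x$ on the cycle instead. However, there is a genuine gap in your derivation of (v) (and hence of (ii), which you build on top of it). You claim that because $(G',\varphi)$ is $(K,\varphi)$ with the pendant vertex $x$ attached at $y$, Lemmas~\ref{L13}, \ref{L15} and \ref{L053} give $r(G',\varphi)=r(K,\varphi)$ and $\alpha(G')=\alpha(K)+1$. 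They do not: Lemma~\ref{L13} gives $r(G',\varphi)=r((K,\varphi)-y)+2$ and Lemma~\ref{L053} gives $\alpha(G')=\alpha(K-y)+1$, and neither equals the claimed quantity in general (attach a pendant vertex to an endpoint of $P_3$: the rank jumps from $2$ to $4$ and the independence number stays at $2$). These identities are consequences of lower-optimality, not of the pendant structure. The paper obtains them by producing a \emph{second} decomposition of $r(G,\varphi)$ and $\alpha(G)$, namely $r(G,\varphi)=r((G,\varphi)-x)=r(P_{l-1},\varphi)+r(K,\varphi)$ and $\alpha(G)=\alpha(G-x)=\alpha(P_{l-1})+\alpha(K)$ via Lemma~\ref{L000}(i),(iv) (this is where (ii) really comes from), and then comparing with the Lemma~\ref{L001}/\ref{L002} decomposition $r(G,\varphi)=r(P_{l-1},\varphi)+r(G',\varphi)$ to force $r(G',\varphi)=r(K,\varphi)$; the equality $\alpha(G')=\alpha(K)+1$ then comes either from deleting the cycle neighbour $s$ of $x$ (even case) or from sandwiching $\alpha(K)$ between $\alpha(G')-1$ and $\alpha(G')-1$ using Theorem~\ref{T30} and Lemma~\ref{L054} (odd case). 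You have all the needed pieces but assemble them in an order that does not yield the stated conclusions.

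A secondary, smaller issue is in (i): the step ``delete vertices one at a time along a path leading to this cycle, repeatedly invoking Lemma~\ref{L000}(ii)'' is not legitimate as written, since Lemma~\ref{L000}(ii) only permits deleting a vertex that \emph{lies on a cycle}; the vertices of a connecting path are not such vertices. To strip the graph down to a configuration where Lemmas~\ref{L50} and \ref{L51} apply, you must alternate two tools, as the paper's induction does: delete a vertex of some \emph{other} cycle via Lemma~\ref{L000}(ii), or delete a pendant vertex together with its (necessarily non-cycle, by Lemma~\ref{L000}(v)) neighbour via Lemma~\ref{L56}, checking at each stage that lower-optimality is preserved and that the target cycle is untouched. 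Your parenthetical alternative points in this direction, but the argument needs to be organized as an induction (or explicit terminating iteration) on the order, which is what the paper does.
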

\begin{proof}
{\bf  (i):} We show (i) by induction on the order $n$ of $(G, \varphi)$. By Lemma \ref{L000}, $x$ can not
be a quasi-pendant vertex of $(G, \varphi)$, then $y$ is not an isolated vertex of $(G, \varphi)$. Then, $(K, \varphi)$ contains at least two vertices, i.e., $n\geq l+2$. If $n=l+2$, then $(K, \varphi)$ contains exactly
two vertices, without loss of generality, assume them be $y$ and $z$. Thus, one has that $(C_{l}, \varphi)=(G, \varphi)-\{ y, z \}$. By Lemma \ref{L56}, we have $(C_{l}, \varphi)$ is lower-optimal.
Then (i) follows from Lemma \ref{L50} directly.

Next, we consider the case of $n \geq l+3$. Suppose that (i) holds for every lower-optimal complex unit gain graph with order smaller than $n$. If $(K, \varphi)$ is a forest. Then $(G, \varphi)$ contains at least one pendant vertex. Let $u$ be
a pendant vertex of $(G, \varphi)$ and $v$ be the vertex which adjacent to $u$. By Lemma \ref{L000}, $v$ is not on $(C_{l}, \varphi)$. By Lemma \ref{L56}, one has that $(G, \varphi)-\{ u, v \}$ is lower-optimal.
By induction hypothesis to $(G, \varphi)-\{ u, v \}$, we have either $\varphi(C_{l}, \varphi)=(-1)^{\frac{l}{2}}$ and $l$ is even or $Re((-1)^{\frac{l-1}{2}}\varphi(C_{l}, \varphi))=0$ and $l$ is odd. Then (i) follows in this case.

If $(K, \varphi)$ contains cycles. Let $g$ be a vertex lying on a cycle of $(K, \varphi)$. By Lemma \ref{L000}, $(G, \varphi)-g$ is lower-optimal. Then, the induction hypothesis to $(G, \varphi)-g$ implies that either $\varphi(C_{l}, \varphi)=(-1)^{\frac{l}{2}}$ and $l$ is even or $Re((-1)^{\frac{l-1}{2}}\varphi(C_{l}, \varphi))=0$ and $l$ is odd. Let $s$ be a vertex lying on $(C_{l}, \varphi)$. By Lemma \ref{L000}, $(G, \varphi)-s$ is lower-optimal. Then, the induction hypothesis to $(G, \varphi)-s$ implies that for each cycle $(C_{q}, \varphi)$ of $(K, \varphi)$ either $\varphi(C_{q}, \varphi)=(-1)^{\frac{q}{2}}$ and $q$ is even or $Re((-1)^{\frac{q-1}{2}}\varphi(C_{q}, \varphi))=0$ and $q$ is odd. This completes the proof of (i).

Next we show (ii)-(v) according to the following two possible cases.

{\bf Case 1.} $\varphi(C_{l}, \varphi)=(-1)^{\frac{l}{2}}$ and $l$ is even.

{\bf  (ii):}
Since $x$ lies on a cycle of $(G, \varphi)$, by Lemmas \ref{L000}, \ref{LPN} and \ref{L053}, one has that
\begin{equation} \label{E6}
r(G, \varphi)=r((G, \varphi)-x)= r(P_{l-1}, \varphi)+r(K, \varphi)=l-2+r(K, \varphi)
\end{equation}
and
\begin{equation} \label{E7}
\alpha(G)=\alpha(G-x)= \alpha(P_{l-1})+\alpha(K)=\frac{l}{2}+\alpha(K).
\end{equation}

{\bf  (iii):} As $(C_{l}, \varphi)$ is a pendant cycle of $(G, \varphi)$, one has that
\begin{equation} \label{E8}
c(K)=c(G)-1.
\end{equation}
By (\ref{E6})-(\ref{E8}), we have
\begin{equation} \label{E9}
r(K, \varphi)+2\alpha(K)=2(n-l)-2c(K).
\end{equation}

{\bf  (iv):} Let $s$ be a vertex of $(C_{l}, \varphi)$ which adjacent to $x$. Then, by Lemmas \ref{L000}, \ref{L13} and \ref{L053}, we have
\begin{equation} \label{E10}
r(G, \varphi)=r((G, \varphi)-s)= l-2+r(G', \varphi)
\end{equation}
and
\begin{equation} \label{E11}
\alpha(G)=\alpha(G-s)= \frac{l-2}{2}+\alpha(G').
\end{equation}
It is obvious that $c(G)=c(G')+1$. Then from (\ref{E10})-(\ref{E11}), we have
\begin{eqnarray*}
r(G', \varphi)+2\alpha(G')&=&r(G, \varphi)+2\alpha(G)-2(l-2)\\
&=&2n-2c(G)-2(l-2)\\
&=&2(n-l+1)-2c(G').
\end{eqnarray*}

{\bf  (v):} Combining (\ref{E6}) and (\ref{E10}), one has that
$$r(K, \varphi)=r(G', \varphi).$$
From (\ref{E7}) and (\ref{E11}), we have
$$\alpha(K)+1=\alpha(G').$$

{\bf Case 2.} $Re((-1)^{\frac{l-1}{2}}\varphi(C_{l}, \varphi))=0$ and $l$ is odd.

{\bf  (ii):}
Since $x$ lies on a cycle of $(G, \varphi)$, by Lemmas \ref{L000}, \ref{L13} and \ref{L053}, one has that
\begin{equation} \label{E06}
r(G, \varphi)=r((G, \varphi)-x)= r(P_{l-1}, \varphi)+r(K, \varphi)=l-1+r(K, \varphi)
\end{equation}
and
\begin{equation} \label{E07}
\alpha(G)=\alpha(G-x)= \alpha(P_{l-1})+\alpha(K)=\frac{l-1}{2}+\alpha(K).
\end{equation}

{\bf  (iii):} As $C_{l}$ is a pendant cycle of $(G, \varphi)$, one has that
\begin{equation} \label{E08}
c(K)=c(G)-1.
\end{equation}
By (\ref{E06})-(\ref{E08}), we have
\begin{equation} \label{E09}
r(K, \varphi)+2\alpha(K)=2(n-l)-2c(K).
\end{equation}

{\bf  (iv)} and {\bf  (v):} By Lemma \ref{L002}, we have
\begin{equation} \label{E010}
r(G, \varphi)= l-1+r(G', \varphi).
\end{equation}
Then, by (\ref{E06}) and (\ref{E010}) we have
\begin{equation} \label{E011}
r(G', \varphi)=r(K, \varphi).
\end{equation}
By (\ref{E09}) and Theorem \ref{T30}, one has that
\begin{eqnarray*}
2\alpha(K)&=&2(n-l)-r(K, \varphi)-2c(K)\\
&=&2(n-l+1)-r(G', \varphi)-2c(K)-2\\
&=&2(n-l+1)-r(G', \varphi)-2c(G')-2\\
&\leq&2\alpha(G')-2.
\end{eqnarray*}
Thus, we have $\alpha(K)\leq \alpha(G')-1$.
On the other hand, by Lemma \ref{L054}, we have $\alpha(K) \geq \alpha(G')-1$.
Hence,
\begin{equation} \label{E012}
\alpha(K) = \alpha(G')-1.
\end{equation}

It is obvious that $c(G')=c(K)$. Combing (\ref{E09}), (\ref{E011}) and (\ref{E012}), one has that
$$r(G', \varphi)+2\alpha(G')=2(n-l+1)-2c(G').$$
This implies (iv). Moreover, equalities (\ref{E011}) and (\ref{E012}) implies (v).

This completes the proof.

\end{proof}

\begin{lemma} \label{L58}
Let $(G, \varphi)$ be a lower-optimal complex unit gain graph. Then
$\alpha(G)=\alpha(T_{G})+\sum_{C \in \mathscr{C}_{G}}\lfloor\frac{|V(C)|}{2}\rfloor-c(G)$.
\end{lemma}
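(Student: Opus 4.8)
The statement to prove is that for a lower-optimal complex unit gain graph $(G,\varphi)$,
$$\alpha(G)=\alpha(T_G)+\sum_{C\in\mathscr{C}_G}\lfloor\tfrac{|V(C)|}{2}\rfloor-c(G).$$
My approach is induction on the cyclomatic number $c(G)$ (and on the order $n$), peeling off structural pieces using the lemmas already established. I should first reduce to the connected case: by Lemma \ref{L51} each component of a lower-optimal graph is lower-optimal, and $\alpha$, $c$, and $\alpha(T_{(\cdot)})$ are all additive over components (note $T_G$ is the disjoint union of the $T_{G_i}$), so the general statement follows once it is proved for connected $(G,\varphi)$.

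For connected $(G,\varphi)$, the base case $c(G)=0$ is a tree, where $T_G=G$ and the sum over cycles is empty, so the identity is trivial. For $c(G)\ge 1$, I would distinguish two cases. \emph{Case 1: $(G,\varphi)$ has a pendant vertex $u$ with neighbor $v$.} By Lemma \ref{L56}, $(G_0,\varphi):=(G,\varphi)-\{u,v\}$ is lower-optimal and $v$ lies on no cycle; hence $\mathscr{C}_{G_0}=\mathscr{C}_G$, $c(G_0)=c(G)$, and $T_{G_0}=T_G-\{u,v\}$ with $u,v$ both non-cyclic (so $[T_{G_0}]=[T_G]-\{u,v\}$ as well, though what I really need is $\alpha(T_{G_0})=\alpha(T_G)-1$, which holds by Lemma \ref{L053} applied to the tree-component containing $u$). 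Since $\alpha(G)=\alpha(G_0)+1$ by Lemma \ref{L053}, the induction hypothesis for $(G_0,\varphi)$ closes this case. \emph{Case 2: $(G,\varphi)$ has no pendant vertex.} Then, since $(G,\varphi)$ is connected with $c(G)\ge 1$ and is lower-optimal (so by Lemma \ref{L55}(i) its cycles are of the good types and, via Lemma \ref{L000}, have special structure), there must be a pendant cycle $(C_l,\varphi)$ attached at a single vertex $x$ of degree $3$ — or $(G,\varphi)$ is itself a cycle, handled by Lemma \ref{L50}. Write $(K,\varphi)=(G,\varphi)-V(C_l)$ (connected). By Lemma \ref{L55}(iii) $(K,\varphi)$ is lower-optimal, and by Lemma \ref{L55}(ii), in both subcases ( ii ), $\alpha(G)=\lfloor l/2\rfloor+\alpha(K)$. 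Meanwhile $\mathscr{C}_G=\mathscr{C}_K\cup\{C_l\}$, $c(G)=c(K)+1$, and contracting $C_l$ to its cyclic vertex shows $T_G$ is obtained from $T_K$ by attaching one new cyclic vertex adjacent (via the contracted image of the $x$–$y$ edge) to $y\in V(K)$; thus $\alpha(T_G)=\alpha(T_K)+\varepsilon$ where $\varepsilon\in\{0,1\}$ records whether the new pendant cyclic vertex can be added to a maximum independent set of $T_K$ avoiding $y$. Applying the induction hypothesis to $(K,\varphi)$ and simplifying, the claimed identity for $(G,\varphi)$ reduces to showing $\varepsilon=1$, i.e. that attaching a pendant vertex to $T_K$ raises its independence number by exactly $1$ — which is precisely Lemma \ref{L053} (pendant vertices always lie in some maximum independent set). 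So this case also closes.

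The main obstacle I anticipate is the bookkeeping around $T_G$ in Case 2: one must argue carefully that the new cyclic vertex is genuinely a \emph{pendant} vertex of $T_G$ (this uses that $C_l$ is a pendant cycle meeting the rest of $G$ only at $x$, so its contracted image has a unique neighbor $y$ in $T_G$), and that no \emph{other} structural interaction occurs when the cycle contracted. Once that is pinned down, the arithmetic $\alpha(G)=\lfloor l/2\rfloor+\alpha(K)=\lfloor l/2\rfloor+\big(\alpha(T_K)+\sum_{C\in\mathscr{C}_K}\lfloor|V(C)|/2\rfloor-c(K)\big)$ combines with $\alpha(T_G)=\alpha(T_K)+1$, $c(G)=c(K)+1$, and $\sum_{C\in\mathscr{C}_G}\lfloor|V(C)|/2\rfloor=\sum_{C\in\mathscr{C}_K}\lfloor|V(C)|/2\rfloor+\lfloor l/2\rfloor$ to give exactly the desired equality. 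A secondary point to verify is that a connected lower-optimal $(G,\varphi)$ with no pendant vertex and $c(G)\ge 1$ which is not a single cycle really does possess a pendant cycle attached at one vertex; this follows from the block structure of $G$ together with Lemma \ref{L000}(v), which forbids vertices on a cycle from being quasi-pendant and forces each cycle vertex to lie on a unique cycle (so all cycles are vertex-disjoint, and a leaf block of the block tree that is a cycle is attached at a single cut vertex).
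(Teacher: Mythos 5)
Your overall strategy (induction, peeling off a pendant vertex or a pendant cycle, matching the paper's two cases) is essentially the paper's, and your Case 1 is fine. But there is a genuine gap in Case 2, at the step where you set $\alpha(T_G)=\alpha(T_K)+\varepsilon$ and claim $\varepsilon=1$ ``which is precisely Lemma \ref{L053}.'' That lemma does not say what you need. Applied to the pendant vertex $w$ (the contracted image of $C_l$) with neighbour $y$ in $T_G$, it gives $\alpha(T_G)=\alpha(T_G-\{w,y\})+1=\alpha(T_K-y)+1$, which equals $\alpha(T_K)+1$ only if $\alpha(T_K-y)=\alpha(T_K)$, i.e.\ only if $y$ is avoidable by some maximum independent set of $T_K$. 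Attaching a pendant vertex does \emph{not} in general raise the independence number: attach a pendant vertex to a leaf $z_1$ of the star $K_{1,3}$; the independence number stays at $3$ because $z_1$ lies in the unique maximum independent set. So ``pendant vertices always lie in some maximum independent set'' is about the new pendant vertex, not about its neighbour, and it does not give $\varepsilon=1$. The fact $\alpha(T_G)=\alpha(T_K)+1$ is indeed true here, but only as a consequence of lower-optimality, and your induction (applied to $K$ alone) gives you no handle on it.

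The paper closes exactly this gap by a different choice of the smaller graph in Case 2: instead of $K=G-V(C_q)$ it uses $G_1=K+x$, the induced subgraph on $V(K)\cup\{x\}$, which is lower-optimal by Lemma \ref{L55}(iv) and satisfies $T_{G_1}=T_G$ (the retained vertex $x$ plays the role of the contracted cyclic vertex), $\mathscr{C}_{G_1}=\mathscr{C}_K$, $c(G_1)=c(K)=c(G)-1$, and $\alpha(G_1)=\alpha(K)+1$ by Lemma \ref{L55}(v). Applying the induction hypothesis to $G_1$ rather than to $K$ makes the comparison of $\alpha(T_G)$ with $\alpha(T_K)$ unnecessary, and the arithmetic then closes with Lemma \ref{L55}(ii). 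To repair your argument you should either switch to this decomposition or separately prove that the attachment vertex $y$ is avoidable in a maximum independent set of $T_K$ — which, as far as I can see, again requires invoking Lemma \ref{L55}(iv)--(v), i.e.\ the paper's route.
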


\begin{proof}

 We argue by induction on the order $n$ of $G$ to show the lemma. If $n=1$, then the lemma
holds trivially. Next, we consider the case of $n \geq 2$. Suppose that the result holds for every lower-optimal complex unit gain graph with order smaller than $n$.

If $E(T_{G})=0$, i.e., $T_{G}$ is an empty graph, then each component of $(G, \varphi)$ is a cycle or an isolated vertex.  For each cycle $C_{l}$, it is routine to check that
$\alpha(C_{l})= \lfloor \frac{l}{2} \rfloor$.  Then the lemma follows.

If $E(T_{G}) \geq 1$. Then $T_{G}$ contains at least one pendant vertex, say $x$. If $x$ is also a pendant vertex in $(G, \varphi)$, then $(G, \varphi)$ contains a pendant vertex. If $x$ is a vertex obtained by contracting a cycle of $(G, \varphi)$, then $(G, \varphi)$ contains a pendant cycle.
Then we will deal with the following two cases.

{\bf Case 1.}  $x$ is also a pendant vertex in $(G, \varphi)$.

Let $y$ be the unique neighbour of $x$ and
$(G_{0}, \varphi)= (G, \varphi)-\{ x, y \}$. By Lemma \ref{L56}, one has that $y$ is not on any cycle of $(G, \varphi)$
and $(G_{0}, \varphi)$ is lower-optimal. Furthermore, it is obvious that $c(G)=c(G_{0})$. By induction hypothesis, we have

(a) $\alpha(G_{0})=\alpha(T_{G_{0}})+\sum_{C \in \mathscr{C}_{G_{0}}}\lfloor\frac{|V(C)|}{2}\rfloor-c(G_{0})$.

Sine $x$ is a pendant vertex of $(G, \varphi)$ and $y$ is a quasi-pendant vertex which is not in any cycle of $(G, \varphi)$, $x$ is a pendant vertex of $T_{G}$ and $y$ is a quasi-pendant vertex of $T_{G}$. Moreover,
$T_{G_{0}}=T_{G}-\{ x, y \}$. Thus, by Lemma \ref{L053} and assertion (a), we have
\begin{eqnarray*}
\alpha(G)&=&\alpha(G_{0})+1\\
&=&\alpha(T_{G_{0}})+\sum_{C \in \mathscr{C}_{G_{0}}}\lfloor\frac{|V(C)|}{2}\rfloor-c(G_{0})+1\\
&=&\alpha(T_{G})-1+\sum_{C \in \mathscr{C}_{G_{0}}}\lfloor\frac{|V(C)|}{2}\rfloor-c(G_{0})+1\\
&=&\alpha(T_{G})+\sum_{C \in \mathscr{C}_{G}}\lfloor\frac{|V(C)|}{2}\rfloor-c(G).
\end{eqnarray*}
Thus, the result holds in this case.

{\bf Case 2.} $x$ lies on a pendant cycle.

Let $x$ lies on a pendant cycle $C_{q}$.
In this case, one can suppose that $x$ is the unique vertex of $C_{q}$ of degree 3. Let $K=G-C_{q}$ and $(G_{1}, \varphi)$ be the induced complex unit gain subgraph of $(G, \varphi)$ with vertex set $V(K)\cup \{ x\}$. By Lemma \ref{L55} (iv), one has that $(G_{1}, \varphi)$ is lower-optimal. By induction hypothesis, we have

(c)  $\alpha(G_{1})=\alpha(T_{G_{1}})+\sum_{C \in \mathscr{C}_{G_{1}}}\lfloor\frac{|V(C)|}{2}\rfloor-c(G_{1})$.

It can be checked that
$$\mathscr{C}_{G}=\mathscr{C}_{G_{1}} \cup C_{q}=\mathscr{C}_{K} \cup C_{q}.$$
Moreover, one has that
\begin{equation} \label{E13}
\sum_{C \in \mathscr{C}_{G}}\lfloor\frac{|V(C)|}{2}\rfloor=\sum_{C \in \mathscr{C}_{G_{1}}}\lfloor\frac{|V(C)|}{2}\rfloor+\lfloor\frac{q}{2}\rfloor=\sum_{C \in \mathscr{C}_{K}}\lfloor\frac{|V(C)|}{2}\rfloor+\lfloor\frac{q}{2}\rfloor.
\end{equation}

Since $C_{q}$ is a pendant cycle of $(G, \varphi)$, it is obvious that
\begin{equation} \label{E14}
c(G_{1})=c(K)=c(G)-1.
\end{equation}
By Lemma \ref{L55} (v), one has that
\begin{equation} \label{E15}
\alpha(G_{1})=\alpha(K)+1.
\end{equation}
Note that
\begin{equation} \label{E150}
T_{G_{1}}=T_{G}.
\end{equation}
By Lemma \ref{L55} (ii) and (\ref{E13})-(\ref{E150}), one has that
\begin{eqnarray*}
\alpha(G)&=&\alpha(K)+\lfloor\frac{p}{2}\rfloor\\
&=&\alpha(G_{1})+\lfloor\frac{p}{2}\rfloor-1\\
&=&\alpha(T_{G_{1}})+\sum_{C \in \mathscr{C}_{G_{1}}}\lfloor\frac{|V(C)|}{2}\rfloor-c(G_{1})+\lfloor\frac{p}{2}\rfloor-1\\
&=&\alpha(T_{G})+\sum_{C \in \mathscr{C}_{G}}\lfloor\frac{|V(C)|}{2}\rfloor-c(G_{1})-1\\
&=&\alpha(T_{G})+\sum_{C \in \mathscr{C}_{G}}\lfloor\frac{|V(C)|}{2}\rfloor-c(G).
\end{eqnarray*}

This completes the proof.
\end{proof}

\noindent
{\bf The proof of Theorem \ref{T50}.}
(Sufficiency.) We proceed by induction on the order $n$ of $(G, \varphi)$. If $n=1$, then the result holds trivially. Therefore we assume that $(G, \varphi)$ is a complex unit gain graph with order $n \geq 2$ and satisfies (i)-(iii). Suppose that any complex unit gain graph of order smaller than $n$ which satisfes (i)-(iii) is
lower-optimal. Since the cycles (if any) of $(G, \varphi)$ are pairwise vertex-disjoint, $(G, \varphi)$ has
exactly $c(G)$ cycles, i.e., $|\mathscr{O}_{G}|=c(G)$.

If $E(T_{G})=0$, i.e., $T_{G}$ is an empty graph, then each component of $(G, \varphi)$ is a cycle or an isolated vertex. By (ii) and Lemma \ref{L50}, we have $(G, \varphi)$ is lower-optimal.

If $E(T_{G}) \geq 1$. Then $T_{G}$ contains at least one pendant vertex. By (iii), one has that
$$\alpha(T_{G})=\alpha([T_{G}])+c(G)=\alpha(T_{G}-\mathscr{O}_{G})+c(G)=\alpha(T_{G}-\mathscr{O}_{G})+|\mathscr{O}_{G}|.$$
Thus, by Lemma \ref{L055} (ii), there exists a pendent vertex of $T_{G}$ which is not in $\mathscr{O}_{G}$. Then, $(G, \varphi)$ contains at least one pendant vertex, say $u$. Let $v$ be the unique neighbour of $u$ and let
$(G_{0}, \varphi)=(G, \varphi)-\{ u, v \}$. It is obvious that $u$ is a pendant vertex of $T_{G}$ adjacent to $v$ and
$T_{G_{0}}=T_{G}-\{ u, v \}$. By Lemma \ref{L053}, one has that
$$\alpha(T_{G})=\alpha(T_{G}-v)=\alpha(T_{G}-\{ u, v \})+1.$$

{\bf Claim.} $v$ does not lie on any cycle of $(G, \varphi)$.

By contradiction, assume that $v$ lies on a cycle of $(G, \varphi)$. Then $v$ is in $\mathscr{O}_{G}$. Note that the size of $\mathscr{O}_{G}$ is $c(G)$. Then, $H:=(T_{G}-v) \cup K_{1}$ is a spanning subgraph of $T_{G}$.
Delete all the edges $e$ in $H$ such that $e$ contains at least one end-vertex in $\mathscr{O}_{G} \backslash \{ v \}$.
Thus, the resulting graph is $[T_{G}] \cup c(G)K_{1}$. By Lemma \ref{L054}, one has that
$$\alpha([T_{G}] \cup c(G)K_{1}) \geq \alpha((T_{G}-v) \cup K_{1}),$$
that is,
$$\alpha([T_{G}])+c(G) \geq \alpha(T_{G}-v)+1.$$
Then, we have
$$\alpha([T_{G}]) \geq \alpha(T_{G}-v)+1-c(G)=\alpha(T_{G})+1-c(G),$$
a contradiction to (iii). This completes the proof of the claim.

Thus, $v$ does not lie on any cycle of $(G, \varphi)$. Moreover, $u$ is also a pendant vertex of $[T_{G}]$ which adjacent to $v$ and $[T_{G_{0}}]=[T_{G}]-\{ u, v \}$. By Lemma \ref{L053}, one has that
$$\alpha([T_{G}])=\alpha([T_{G_{0}}])+1.$$
It is routine to checked that $c(G)=c(G_{0}).$
Thus,
\begin{eqnarray*}
\alpha(T_{G_{0}})&=&\alpha(T_{G})-1\\
&=&\alpha([T_{G}])+c(G)-1\\
&=&\alpha([T_{G_{0}}])+1+c(G)-1\\
&=&\alpha([T_{G_{0}}])+c(G_{0}).
\end{eqnarray*}

Combining the fact that all cycles of $(G, \varphi)$ belong to $(G_{0}, \varphi)$, one has that
$(G_{0}, \varphi)$ satisfies all the conditions (i)-(iii). By induction hypothesis, we have $(G_{0}, \varphi)$
is lower-optimal. By Lemma \ref{L56}, we have $(G, \varphi)$ is lower-optimal.

(Necessity.) Let $(G, \varphi)$ be a lower-optimal complex unit gain graph. If $(G, \varphi)$ is a complex unit gain acyclic graph, then
(i)-(iii) holds directly. So one can suppose that $(G, \varphi)$ contains cycles. By Lemma \ref{L000} (v) and \ref{L55} (i), one has that the cycles (if any) of $(G, \varphi)$ are pairwise vertex-disjoint and for each cycle $(C_{l}, \varphi)$ of $(G, \varphi)$, either $\varphi(C_{l}, \varphi)=(-1)^{\frac{l}{2}}$ and $l$ is even or $Re((-1)^{\frac{l-1}{2}}\varphi(C_{l}, \varphi))=0$ and $l$ is odd. This completes the proof of (i) and (ii).

Next, we argue by induction on the order $n$ of $(G, \varphi)$ to show (iii). Since $(G, \varphi)$ contains cycles, $n \geq 3$. If $n=3$, then $(G, \varphi)$ is a 3-cycle and (iii) holds trivially. Therefore we assume that $(G, \varphi)$ is a lower-optimal complex unit gain graph with order $n \geq 4$.
Suppose that (iii) holds for all lower-optimal complex unit gain graphs of order smaller than $n$.

If $E(T_{G})=0$, i.e., $T_{G}$ is an empty graph, then each component of $(G, \varphi)$ is a cycle or an isolated vertex. Then, (iii) follows.

If $E(T_{G}) \geq 1$. Then $T_{G}$ contains at least one pendant vertex, say $x$. If $x$ is also a pendant vertex in $(G, \varphi)$, then $(G, \varphi)$ contains a pendant vertex. If $x$ is a vertex obtained by contracting a cycle of $(G, \varphi)$, then $(G, \varphi)$ contains a pendant cycle.
Then we will deal with (iii) with the following two cases.

{\bf Case 1.} $x$ is a pendant vertex of $(G, \varphi)$.

Let $y$ be the unique neighbour of $x$ and
$(G_{1}, \varphi)= (G, \varphi)-\{ x, y \}$. By Lemma \ref{L56}, one has that $y$ is not on any cycle of $(G, \varphi)$
and $(G_{1}, \varphi)$ is lower-optimal. By induction hypothesis, we have
$$\alpha(T_{G_{1}})=\alpha([T_{G_{1}}])+c(G_{1}).$$

Note that $x$ is also a pendant vertex of $T_{G}$ which adjacent to $y$, then $T_{G_{1}}=T_{G}-\{ x, y \}$,
$[T_{G_{1}}]=[T_{G}]-\{ x, y \}$ and $c(G)=c(G_{1})$.
By Lemma \ref{L053}, it can be checked that
$$\alpha(T_{G})=\alpha([T_{G}])+c(G).$$

The result follows.

{\bf Case 2.} $(G, \varphi)$ contains a pendant cycle.

Let $(C_{q}, \varphi)$ be a pendant complex unit gain cycle of $(G, \varphi)$ and $(K, \varphi)=(G, \varphi)-(C_{q}, \varphi)$. By Lemma \ref{L55} (ii), one has that
$(K, \varphi)$ is lower-optimal. By induction hypothesis, we have
\begin{equation} \label{E20}
\alpha(T_{K})=\alpha([T_{K}])+c(K).
\end{equation}
In view of Lemma \ref{L55} (ii), one has
\begin{equation} \label{E21}
\alpha(G)=\alpha(K)+\lfloor\frac{q}{2}\rfloor.
\end{equation}
Since $\mathscr{C}_{G}=\mathscr{C}_{K} \cup C_{q}$.
Then, we have
\begin{equation} \label{E22}
\sum_{C \in \mathscr{C}_{G}}\lfloor\frac{|V(C)|}{2}\rfloor=\sum_{C \in \mathscr{C}_{K}}\lfloor\frac{|V(C)|}{2}\rfloor+\lfloor\frac{q}{2}\rfloor.
\end{equation}
Since  $(G, \varphi)$ and  $(K, \varphi)$ are lower-optimal, by Lemma \ref{L58}, we have
\begin{equation} \label{E23}
\alpha(T_{G})=\alpha(G)-\sum_{C \in \mathscr{C}_{G}}\lfloor\frac{|V(C)|}{2}\rfloor+c(G)
\end{equation}
and
\begin{equation} \label{E24}
\alpha(T_{K})=\alpha(K)-\sum_{C \in \mathscr{C}_{K}}\lfloor\frac{|V(C)|}{2}\rfloor+c(K).
\end{equation}
It is routine to check that $c(G)=c(K)+1$. Then combining (\ref{E20})-(\ref{E24}), we have
\begin{eqnarray*}
\alpha(T_{G})&=&\alpha(G)-\sum_{C \in \mathscr{C}_{G}}\lfloor\frac{|V(C)|}{2}\rfloor+c(G)\\
&=&\alpha(K)+\lfloor\frac{q}{2}\rfloor-\sum_{C \in \mathscr{C}_{G}}\lfloor\frac{|V(C)|}{2}\rfloor+c(G)\\
&=&\alpha(K)-\sum_{C \in \mathscr{C}_{K}}\lfloor\frac{|V(C)|}{2}\rfloor+c(G)\\
&=&\alpha(K)-\sum_{C \in \mathscr{C}_{K}}\lfloor\frac{|V(C)|}{2}\rfloor+c(K)+1\\
&=&\alpha(T_{K})+1.
\end{eqnarray*}
Note that
\begin{equation} \label{E26}
[T_{G}]\cong [T_{K}].
\end{equation}
Then, in view of (\ref{E20}) and (\ref{E26}), one has that
\begin{eqnarray*}
\alpha(T_{G})&=&\alpha(T_{K})+1\\
&=&\alpha([T_{K}])+c(K)+1\\
&=&\alpha([T_{G}])+c(G).\\
\end{eqnarray*}

This completes the proof.
$\square$

\section*{Acknowledgments}

This work was supported by the National Natural Science Foundation of China
(No. 11731002), the Fundamental Research Funds for the Central Universities (No. 2016JBZ012) and the 111 Project of China (B16002).

\end{document}